\newcommand{\vct}{\boldsymbol }
\newcommand{\mat}{\mathbf }
\newcommand{\tlam}{\tilde{\lambda}}
\newcommand{\tB}{\tilde{B}}
\newcommand{\tilb}{\tilde{b}}
\renewcommand{\tilde}{\widetilde}
\renewcommand{\hat}{\widehat}
\renewcommand{\bar}{\overline}
\newcommand{\mS}{\mathcal S}
\newcommand{\mX}{\mathcal X}
\renewcommand{\hat}{\widehat}
\renewcommand{\tilde}{\widetilde}
\renewcommand{\bar}{\overline}
\def\BState{\State\hskip-\ALG@thistlm}
\newcommand{\ud}{\mathrm d}
\newcommand{\R}{ \mathbb{R} }
\newcommand{\const}{\mathrm{const}}
\newcommand{\mZ}{\mathcal Z}
\definecolor{DSgray}{cmyk}{0,1,0,0}
\def\unblind{1}
\definecolor{mygray}{RGB}{195,195,195}
\begin{document}




\RUNTITLE{Re-solving heuristics for Assortment Optimization}

\TITLE{A Re-solving Heuristic for Dynamic Assortment Optimization with Knapsack Constraints}

\ifdefined\unblind
\ARTICLEAUTHORS{%
\AUTHOR{Xi Chen \thanks{Author names listed in alphabetical order. Correspondence should be directed to Yining Wang (yining.wang@utdallas.edu).}}
\AFF{Stern School of Business, New York University, New York, NY 10011, USA}
\AUTHOR{Mo Liu}
\AFF{Department of Statistics and Operations Research, University of North Carolina, Chapel Hill, NC 27599, USA}
\AUTHOR{Yining Wang}
\AFF{Naveen Jindal School of Management, University of Texas at Dallas, Richardson, TX 75080, USA}
\AUTHOR{Yuan Zhou}
\AFF{Yau Mathematical Sciences Center \& Department of Mathematical Sciences, Tsinghua University, Beijing, China}
} 
\fi

\ABSTRACT{In this paper, we consider a multi-stage dynamic assortment optimization problem with multi-nomial choice modeling (MNL) under resource knapsack constraints. Given the current resource inventory levels, the retailer makes an assortment decision at each period, and the goal of the retailer is to maximize the total profit from purchases.  
With the exact optimal dynamic assortment solution being computationally intractable, a practical strategy is to adopt the re-solving technique that periodically re-optimizes deterministic linear programs (LP) arising from fluid approximation. However, the fractional structure of MNL makes the fluid approximation in assortment optimization non-linear, which brings new technical challenges. To address this challenge, we propose a new epoch-based re-solving algorithm that effectively transforms the denominator of the objective into the constraint, {so that the re-solving technique is applied to a \emph{linear program} with additional slack variables amenable to practical computations and theoretical analysis.} Theoretically, we prove that the regret (i.e., the gap between the resolving policy and the optimal objective of the fluid approximation) scales logarithmically with the length of time horizon and resource capacities.
}

\KEYWORDS{Assortment optimization, multinomial-logit choice models, knapsack constraints, re-solving}


\date{}

\maketitle


\section{Introduction}

Assortment optimization is a core problem in revenue management, which has a wide range of applications in online retailing and advertising. For a given set of substitutable products, the assortment optimization problem refers to the selection of a subset of products (a.k.a., an assortment) offered to a customer. For example, the brand Fanta has 7 popular flavors\footnote{According to \url{https://www.fanta.com/products}, there are 7 popular flavors, including Orange, Pineapple, Strawberry, Grape, Pina Colada, Peach, Fruit punch, Zero sugar, and Berry.}. For a Fanta vending machine with at most three slots, the assortment optimization aims at selecting at most three flavors to maximize a certain objective, such as purchasing probability or the expected revenue.  One key in assortment optimization is the modeling of customer choice behavior when facing multiple products. {In practice, the multi-nomial logit (MNL) model is arguably the most widely used choice model (see Eq.~\eqref{eq:defn-mnl} for details),
dating back to the foundational works of \cite{mcfadden1972conditional,McFadden1980,mcfadden2000mixed}.}
Under the MNL, the single-stage assortment optimization problem is essentially a combinatorial optimization problem, which has been extensively studied in the literature \citep{Ryzin1999,Mahajan2001,Talluri2004,Gallego2004,Liu2008,rusmevichientong2010dynamic}. 

In this paper, we study the {multi-stage} assortment optimization under resource inventory constraints. The resource inventory constraint, which is essentially a type of knapsack constraint, has been widely studied in network revenue management for multi-product pricing  \citep{Gallego1997}. Similar to the case of network revenue management, exact solutions to multi-period assortment optimization with resource constraints require an exponential number of states in a Markov decision process and is therefore computationally intractable.
A fluid approximation is desirable, which replaces hard resource constraints with ones that only hold in expectation that greatly simplifies the computational burden
of finding an (approximately) optimal solution.
Heuristic methods such as probability sampling or resource tracking could then be proposed to match such fluid approximation solutions.

With the optimal solution to a fluid approximation formulation obtained,
one effective approach to match such approximation in dynamic pricing is called ``re-solving'' (a.k.a., ``re-optimization'') method, which periodically resolves the fluid approximation based on the left inventory level. For various dynamic pricing problems (e.g., single product v.s. network revenue management, different demand models, the presence of reusable products), many re-solving techniques have been proposed and analyzed in the recent literature (see, e.g., \cite{jasin2014reoptimization, chen2015real,chen2019nonparametric,lei2020real,Wang2020resolve,Wang2020constant,lei2018randomized,maglaras2006dynamic,cooper2002asymptotic,reiman2008asymptotically,secomandi2008analysis,vera2021online}). For network revenue management, the state-of-the-art results from \cite{Wang2020resolve, Wang2020constant} establish constant regrets, which improve the classical $\log(T)$ regret bound \citep{jasin2014reoptimization}. The re-solving technique has also been applied to linear contextual bandits with knapsack constraints \citep{Xu21reopt}.

Despite the vast literature on re-solving for network revenue management, there is little research on re-solving for assortment optimization. One key challenge is due to the special structure of the MNL choice model (see Eq.~\eqref{eq:defn-mnl} in Sec.~\ref{sec:form}). 
{In particular, the objective arising from the MNL choice model, which is in the form of fractional programming, is non-linear,
because the probability of choosing a particular product from an offered assortment is the product's utility normalized (divided) by utility of all the other products in the offered assortment,
{where utilities are fixed parameters and in an MNL choice model they correspond to the exponentiated average of Gumbel valuations.}
In a similar vein, resource constraints involving the MNL choice model are also non-linear because they involve purchase probabilities of offered assortments that are expressed as fractions. }
In most existing research on re-solving, the linearity of objectives or constraints of fluid approximation plays an important role in both computation and analysis. To address the challenge brought by the non-linearity, we develop a new epoch-based re-solving algorithm. The technical novelty in our algorithm is two-fold:
\begin{enumerate}
\item Instead of re-solving the original fluid approximation, we propose a new linear programming (LP) formulation that moves the denominator of the fluid approximation into the constraint. This re-formulation not only has computational advantages, but also greatly facilitates regret analysis. By leveraging the envelop theorem \citep{milgrom2002envelope,tercca2020envelope}, this re-formulation greatly helps the analysis of the stability of the re-solved problem.

\item To determine the right time for resolving, we adopt an epoch-based approach. In particular, our algorithm first resolves the original fluid approximation to estimate the number of epochs. For each epoch, we re-solve the LP and sample an assortment based on the LP solution. The sampled assortment will be offered repeatedly until no-purchase, which concludes this epoch.
The idea of offering the same assortment until no-purchase is originated from \cite{agrawal2019mnl}. However, the use of such an epoch-based strategy is for a different reason in our problem. Due to the non-linearity of the objective, if one samples an assortment at each time period,  the probability of making a purchase based on the sampled assortment will \emph{not} equal to the objective of the fluid approximation. On the other hand, the epoch-based sampling until no-purchase will guarantee the equality holds.

\item The optimal solution to the fluid approximation is in the form of a probability distribution. Converting such a distribution to actual assortment offerings is non-trivial, because hard 
resource capacity constraints must be satisfied. In this paper, we use an efficient sampling algorithm based on the Birkhoff-von-Neumann decomposition to carry out the sampling
efficiently while at the same time respecting hard capacity constraints.
\end{enumerate}

With this epoch-based re-solving technique in place,  the main theoretical contribution of the paper is the development of the total regret, i.e., the total gap of expected profit between fluid approximation solution and our policy. 
 We proved that the total regret is logarithmic in the time horizon $T$.
 Prior to this paper, the works on computationally efficient methods for assortment optimization with knapsack constraints have regret on the order of $\widetilde O(\sqrt{T})$, by utilizing primal dual approaches \citep{jiang2020online,miao2021general} or solving fluid approximation with $NT$ optimization variables \citep{lei2018randomized}, both of which are quite different from the techniques used in this paper.

We also briefly mention that there is some recent literature on dynamic assortment optimization on inventory constraints (see, e.g., \citet{Golrezaei2014,Chen:16recom,Fata19mutli} and references therein).  These works assume different customer types with adversarial arrivals and focus on developing competitive ratios for the worst-case performance. Under the competitive ratio criteria, recent works \citep{Gong:19:online, Feng:20:near} further study the impact of reusable products in online assortment optimization. There is another line of research that assumes that the underlying choice model is unknown (note that we assume the utility parameters of the underlying MNL model are given). The goal is to learn the choice model and make the assortment selection simultaneously (see, e.g., \citet{rusmevichientong2010dynamic, Saure2013,agrawal2017thompson,Chen:18tight, agrawal2019mnl,Chen:20context,chen2021dynamic,chen2021optimal}). This line of research usually does not take the inventory constraint into consideration.
Furthermore, when consumers' demand model is unknown and needs to be learnt, it is theoretically not possible to achieve regret lower than $\Omega(\sqrt{T})$,
as shown by the works of \cite{agrawal2019mnl,Chen:18tight}.
This makes the learning-while-doing setting not directly comparable with the results derived in this paper. 

{
Regarding specifically results obtaining \emph{competitive ratios}, \cite{Golrezaei2014} studied online assortment planning with multple consumer classes whose arrival patterns and preferences are adversarial, and derived an $1-1/e$ competitive ratio for large inventory settings. With additional assumptions of stochastic arrivals, the competitive ratios can be further improved to 0.75 or 0.78. \cite{Gong:19:online} extends the method to reusable resources settings with random resource consumption times,
improving over an earlier result of $1/2$ competitive ratio established in \citep{rusmevichientong2020dynamic} for similar settings.
\cite{goyal2025asymptotically} shows that when resource consumption durations are unknown, previously known methods are sub-optimal
and a new policy must be designed to achieve the optimal $1-1/e$ competitive ratio.
We remark that, in all of the works mentioned in this paragraph, there is something inherently \emph{adversarial} in the model such as arriving patterns or preference parameters in choice models, rendering competing against the best policy in hindsight impossible even with asymptotically long time horizons.
In contrast, this work studies \emph{stochastic} purchase behaviors of homogeneous consumers with a long time horizon,
and therefore the ``equivalent'' competitive ratio could approach 1 as $T$ goes to infinity, and we are precisely characterizing how fast the performance of our propose policy approaches that of the optimal policy in hindsight.
}
 
The rest of the paper is organized as follows. Sec.~\ref{sec:form} introduces the background of the problem, including the problem setup, fluid approximations, and necessary technical assumptions. Sec.~\ref{sec:policy} describes the proposed re-solving policy. Sec.~\ref{sec:regret} provides the regret analysis and presents our main theoretical result in Theorem \ref{thm:constant-regret}. 
Numerical results are given in Sec.~\ref{sec:numerical}. We conclude the paper in Sec.~\ref{sec:conclusion}. 

\section{Problem Formulation and Preliminaries}
\label{sec:form}

We study a stylized multi-period assortment optimization problem with knapsack constraints.
At the beginning, the retailer has access to $M$ types of resources. Let us denote  initial inventory for the type-$j$ resource by $C_{0,j}>0$ for $j=1,\ldots, M$.
The retailer offers $N$ different types of substitutable products, with each unit of product $i$ consuming $A_{ij}$ units of inventory of resource $j$.
The entire dynamic assortment optimization problem consists of $T$ time periods.
At the beginning of time period $t$, the retailer offers an \emph{assortment} $S_t\subseteq[N]$ of size at most $K$,
such that sufficient inventory levels are present for each single product in the offered assortment (i.e., $C_{t,j}\geq A_{i,j}$ for all $i\in S_t$ and $j\in[M]$).
The customer then either chooses to purchase one of the presented product $i_t\in S_t$, or leaves without purchasing anything (denoted as $i_t=0$).

If a product $i\in S_t$ is purchased,  the retailer collects profit $r_i\in[0,1]$ and the inventory level of resource type $j\in[M]$ is depleted by an amount of $A_{ij}$; if no product is purchased then the retailer collects no revenue and no inventory is depleted. In this paper, we use $y_t\in[0,1]$ to denote the retailer's profit at time $t$.
  {For notational simplicity, we denote by the $M$-dimensional vector $C_t=(C_{t,1},\ldots, C_{t,M})$ the remaining inventory for all $M$ resources at the beginning of time $t$ for $t=0,\ldots, T$. We also denote by $A_{i,\cdot}=(A_{i,1},\ldots, A_{i,M})$ the resource consumptions for each unit of product $i$.  Thus, if the product $i$ is purchased during the period $t$, the remaining inventory at beginning of time $t+1$ will be $C_{t+1}=C_t-A_{i,\cdot}$.	
}

To model the consumers' randomized discrete choices we use the classical multi-nomial logit (MNL) choice model.
The MNL model associates each product type $i\in[N]$ with a preference score  $v_i>0$ (i.e., the exponential of the mean utility for the $i$-th product).
Given an assortment $S_t\subseteq[N]$, the customer's purchasing decision $i_t$ is governed by 
\begin{equation}
\Pr[i_t=i|S_t] = \frac{v_i}{v_0 + \sum_{k\in S_t}v_k},\;\;\;\;\;\;i\in S_t\cup\{0\},
\label{eq:defn-mnl}
\end{equation}
where $v_0 \equiv 1$ in the denominator denotes the preference score of ``no-purchase''. 
The expected revenue of an assortment $S_t$ can then be expressed as
\begin{equation}
R(S_t)=\sum_{i\in S_t}r_i\Pr[i_t=i|S_t] =\frac{\sum_{ i \in S_t }r_i v_i}{v_0 + \sum_{i \in S_t}v_i}.
\label{eq:defn-R}
\end{equation}

The objective of the retailer is to design a dynamic assortment optimization policy so that the expected profit is maximized subject to resource inventory constraints.
More specifically, an admissible policy $\pi$ consists of $T$ mappings $\pi_1,\pi_2,\cdots,\pi_T$,
with $\pi_t$ mapping the inventory levels $C_{t,1},\cdots,C_{t,M}$ at the beginning of time $t$ to an assortment selection $S_t\subseteq[N]$, $|S_t|\leq K$.
The expected total revenue for a policy $\pi$ with initial inventory vector $C_0$
can then be expressed as $V_1^{\pi}(C_0)=\mathbb E^\pi[\sum_{t=1}^T r_{i_t}]$, with the understanding that $r_0=0$.
Note that throughout the assortment optimization procedure, the retailer has full knowledge of the model $\mathcal M=(A_{ij},v_i,r_i)_{i,j=1}^{N,M}$.

{
}

\subsection{Fluid approximation and fractional relaxations}

In this section, we introduce a fluid approximation of the original DP problem. 
First, we introduce the following \emph{encoding scheme} of an assortment. For a given assortment $S$, we encode $S$ by a vector $x$ of length $N$ with each element $x_i=\vct 1\{i\in S\}$ for $i=1,\ldots, N$. Here, the expression $\vct 1\{i\in S\}$ denotes the indicator function, which takes the value 1 when $i \in S$ and the value 0 otherwise. For the computational tractability of the fluid approximation, we adopt a fractional relaxation approach by allowing $x$ to be a real-valued vector, i.e., $x \in [0,1]^N$. This relaxation enables us to extend the domain of $R(\cdot)$ to $[0,1]^N$
as follows:
\begin{equation}
R(x) := \frac{\sum_{i=1}^N r_iv_ix_i}{1+\sum_{i=1}^N v_ix_i}, \;\;\;\;\;\;\forall x\in[0,1]^N.
\end{equation}
Note that for any $S\subseteq[N]$, the value of $R(x)$ with $x_i=\vct 1\{i\in S\}$ exactly matches the definition of $R(S)$ in Eq.~(\ref{eq:defn-R}).
We then introduce the fluid approximation formulation below, which maximizes $R(x)$ over $x\in[0,1]^N$ with the inventory constraints relaxed so that they hold only in expectation:
\begin{definition}[Fluid approximation]
For a given model $\mathcal M=(A_{ij},r_i,v_i)_{i,j=1}^{N,M}$ and initial inventory vector $C_0\in\mathbb R_+^M$,
define $\gamma_{0,j} := C_{0,j}/T$ for all resource type $j\in[M]$ and $\gamma_0=(\gamma_{0,1},\cdots,\gamma_{0,M})\in\mathbb R_+^M$.
The fluid approximation problem is formulated as
\begin{eqnarray}
x^F &:=& \arg\max_{x\in[0,1]^N} R(x)\;\;\;\;\;\;s.t.\;\; \|x\|_1\leq K,\;\; A^\top \nu(x) \leq \gamma_0,
\label{eq:fluid}
\end{eqnarray}
where $[\nu(x)]_i = v_ix_i/(1+\sum_{k=1}^N v_kx_k)$.
\label{defn:fluid}
\end{definition}

{
It should be noted that the fluid approximation introduced in Definition \ref{defn:fluid} is ``static'' in nature because it is independent of the actual planning horizon $T$,
which is in contrast to certain ``dynamic'' approximation benchmarks such the ``hindsight optimum'' benchmark \citep{Wang2020resolve}
and the dynamic programming benchmark \citep{Wang2020constant} for network revenue management.
{Note that Eq.~(\ref{eq:fluid}) involves the quantity $\gamma_0$ obtained by dividing the initial inventory levels $C_0$ by the time horizon $T$, because it represents the target resource consumption ``per period''.}
An interesting future direction of research would be
to obtain more powerful constant regret bounds or to remove non-degeneracy conditions such as those listed in Assumption (A3), 
using such alternative approximation benchmarks.
}

{
The following lemma shows that Definition \ref{defn:fluid} serves as an upper bound to the expected revenues of any admissible policies,
after normalization across time horizon is properly carried out.
\begin{lemma}
For any admissible policy $\pi$ over $T$ time periods, it holds that $\mathbb E^\pi[\sum_{t=1}^T R(S_t)] \leq T R(x^F)$.
\label{lem:fluid}
\end{lemma}
While Lemma \ref{lem:fluid} looks standard, its proof is highly non-trivial because we can no longer apply arguments involving Jensen's inequality
(which is how similar results on network revenue management are typically proved; see e.g.~\citep{jasin2014reoptimization,Wang2020constant})
because the objective function $R(\cdot)$ in Eq.~(\ref{eq:fluid}) is \emph{not} concave.
Instead, we construct two auxiliary LP formulations, one based on probability distributions over \emph{all} possible assortments and the other 
based on the Charnes-Cooper transform of Eq.~(\ref{eq:fluid}), and show that they share the same dual problem as Eq.~(\ref{eq:fluid}),
yielding the same optimal values that upper bound the value of any admissible policy $\pi$.

To avoid interrupting the flow of this section, we place the complete proof of Lemma \ref{lem:fluid} in the appendix.
}

\begin{remark}
While $R(x^F)$ upper bounds the per-period expected reward of any admissible policy, it is in general not true that $\mathbb E[R(z)]=R(x^F)$ if $\mathbb E[z]=x^F$,
meaning that implementing a policy that uses i.i.d.~assortment samples per period is likely sub-optimal in the long run.
To correct for such bias, we use the strategy of offering assortments \emph{repetitively until the first no-purchase activity}, which converges to $R(x^F)$ in the long run.
Further details are given in Algorithm \ref{alg:resolve} and Sec.~\ref{subsec:epoch} later this paper.
\end{remark}

While Eq.~(\ref{eq:fluid}) has rather complex structures, it should be noted that the objective function $R(x)$ is a fractional of two linear functions of $x$, and the constraint $A^\top\nu(x)\leq\gamma_0$
can be re-formulated so that it is a linear constraint in $x$. The fluid approximation can then be efficiently solved by using fractional programming techniques, as we remark in Sec.~\ref{subsec:fluid-solve} later.


Note that, when the model $\mathcal M=(A_{ij},v_i)_{i,j=1}^{N,M}$ and the capacity constraint $K$ is fixed,
the solution and objective of the fluid approximation (\ref{eq:fluid}) only depends on the normalized inventory vector $\gamma_0\in\mathbb R_+^M$.
Thus, we define $\Phi(\gamma)$ as the optimal objective of the fluid approximation when the normalized inventory vector corresponds with $\gamma$, or more specifically
\begin{equation}
\Phi(\gamma) := \max_{x\in[0,1]^N} R(x)\;\;\;\;\;\;s.t.\;\; \|x\|_1\leq K,\;\; A^\top \nu(x) \leq \gamma.
\label{eq:Phi}
\end{equation}
%

In this paper we also study a slightly different fluid approximation. For a normalized inventory vector $\gamma\in\mathbb R_+^M$
and a denominator budget parameter $s\geq 1$, define
\begin{equation}
\textstyle
\Psi(\gamma,s) := \max_{x\in[0,1]^N} \sum_{i=1}^N r_iv_ix_i \;\;\;\;\;\;s.t.\;\;\|x\|_1\leq K,\;\; A^\top\nu(x)\leq\gamma,\;\; 1+\sum_{i=1}^Nv_ix_i \leq s.
\label{eq:Psi}
\end{equation}
Comparing $\Psi(\gamma,s)$ with $\Phi(\gamma)$, we note that $\Psi(\gamma,s)$ can be converted into a linear program,
making it easier to solve and analyze. The connection between $\Phi$ and $\Psi$ is made explicit by the following lemma, which is proved in the appendix (see Appendix \ref{app:proof}).
\begin{lemma}
{For any $\gamma\geq 0$,}
let $x^F$ be the optimal solution to $\Phi(\gamma)$ and $s=1+\sum_{i=1}^N {v_i}x_i^F$. Then $\Phi(\gamma)=R(x^F) = \Psi(\gamma,s)/s$.
\label{lem:phi-psi}
\end{lemma}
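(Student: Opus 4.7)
The plan is to establish the two equalities separately. The first equality $\Phi(\gamma)=R(x^F)$ is immediate from the definition of $x^F$ as the optimal solution to (\ref{eq:Phi}), so all of the work goes into showing $R(x^F)=\Psi(\gamma,s)/s$. Rewriting the objective of $\Phi$ with the definition of $s$, note that $R(x^F)=\sum_{i=1}^N r_i v_i x_i^F/s$, so the target identity is equivalent to $\sum_{i=1}^N r_i v_i x_i^F=\Psi(\gamma,s)$. I will prove this by showing inequalities in both directions.

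For the $\le$ direction, I will verify that $x^F$ is feasible for the program (\ref{eq:Psi}) that defines $\Psi(\gamma,s)$. The first two constraints $\|x\|_1\le K$ and $A^\top\nu(x)\le\gamma$ are inherited from $x^F$'s feasibility in $\Phi(\gamma)$, and the denominator constraint $1+\sum_{i=1}^N v_ix_i\le s$ holds with equality by the very definition of $s$. Hence $\sum_{i=1}^N r_i v_i x_i^F$ is achievable in (\ref{eq:Psi}), giving $\sum_{i=1}^N r_i v_i x_i^F\le\Psi(\gamma,s)$.

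For the $\ge$ direction, I will argue by contradiction. Suppose some $\tilde x\in[0,1]^N$ feasible for $\Psi(\gamma,s)$ attains $\sum_{i=1}^N r_iv_i\tilde x_i>\sum_{i=1}^N r_iv_i x_i^F$. Using $1+\sum_{i=1}^N v_i\tilde x_i\le s$, we may bound
\[
R(\tilde x)=\frac{\sum_{i=1}^N r_iv_i\tilde x_i}{1+\sum_{i=1}^N v_i\tilde x_i}\;\ge\;\frac{\sum_{i=1}^N r_iv_i\tilde x_i}{s}\;>\;\frac{\sum_{i=1}^N r_iv_i x_i^F}{s}=R(x^F).
\]
But $\tilde x$ automatically satisfies $\|x\|_1\le K$ and $A^\top\nu(x)\le\gamma$, so $\tilde x$ is feasible for $\Phi(\gamma)$, contradicting the optimality of $x^F$. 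Therefore $\Psi(\gamma,s)\le\sum_{i=1}^N r_iv_i x_i^F$, and combining the two inequalities yields the claim.

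I do not anticipate a serious obstacle here; the proof is essentially bookkeeping around the definition of $s$. The only subtlety worth highlighting is that the constraint $1+\sum_i v_ix_i\le s$ in $\Psi(\gamma,s)$ is an inequality rather than an equality, which is what lets the contradiction argument go through cleanly (a feasible $\tilde x$ may have a strictly smaller denominator, which only makes $R(\tilde x)$ larger relative to the bound $\sum_i r_i v_i\tilde x_i/s$). No appeal to the envelope theorem or to continuity of $\Phi$ or $\Psi$ is needed at this stage.
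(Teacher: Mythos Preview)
Your proof is correct and is the natural two-sided inequality argument: feasibility of $x^F$ for $\Psi(\gamma,s)$ gives $\Psi(\gamma,s)\ge\sum_i r_iv_ix_i^F$, and any strict improvement in $\Psi$ would yield a $\tilde x$ feasible for $\Phi(\gamma)$ with $R(\tilde x)>R(x^F)$ via the denominator bound, contradicting optimality. The paper defers the proof to the appendix but does not actually include it in the provided source; your argument is exactly the elementary bookkeeping one expects here and matches the structure the paper's formulation is designed to enable.
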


\subsection{Assumptions}

We make the following assumptions throughout this paper.
\begin{enumerate}
\item[(A1)] There exists finite constants $A_0,B_0<\infty$ such that $v_i\in[0,B_0]$ and $A_{ij}\leq A_0$ for all $i\in[N]$, $j\in[M]$;
\item[(A2)] There exists a positive constant $\gamma_{\min}>0$ such that $\gamma_{0,j}\geq\gamma_{\min}$ for all $j\in[M]$;
\item[(A3)] {Recall the definition that $x^F$ is the optimal solution to the fluid approximation problem in Eq.~(\ref{eq:fluid}).}
There exist positive constants $0<\rho_0,\chi_0<\gamma_{\min}$ such that, for any $\delta\in\mathbb R^M,\epsilon\in\mathbb R$ satisfying $\|\delta\|_{\infty}\leq\rho_0$ and
$|\epsilon|\leq\rho_0$, the following hold:
(1) $\sum_i v_ix_i^F> \rho_0$; (2) the set of binding/active constraints in $\Psi(\gamma_0+\delta,1+\sum_iv_ix_i^F+\epsilon)$ is the same as $\Psi(\gamma_0,1+\sum_i v_ix_i^F)$;
(3) after converting the $(M+2)$-linear constraints and $2N$ bounded interval constraints $0 \le x_i\le 1$ of $\Psi$ into the standard form $\tilde Ax\leq \tilde b$ for some numerical matrix $\tilde A\in \R^{(M + 2N + 2)\times N}$, the sub-matrix of $\tilde A$ which corresponds to the binding constraints in $\Psi(\gamma_0+\delta,1+\sum_iv_ix_i^F+\epsilon)$ is invertible with its smallest singular value being greater than or equal to $\chi_0$.  
\end{enumerate}

Assumption (A1) assumes that the product utility parameters and the amount of resources consumed by each product sale are upper bounded.
{Assumption (A2) assumes that the initial inventory levels $C_{0,j}$ of all resources scale linearly with time horizon $T$; i.e.~$C_{0,j}\geq \gamma_{\min}T$ for all $j$,
which is equivalent to $\gamma_{0,j}\geq\gamma_{\min}$ where $\gamma_{0,j}=C_{0,j}/T$ is the normalized inventory level for resource type $j$.}
Both assumptions are standard in the revenue management with inventory constraints literature \citep{WDY2014,ferreira2018online,besbes2012blind,jasin2014reoptimization}.

Assumption (A3) is a non-degeneracy condition which needs more explanations.
In essence, it assumes that the set of binding/active constraints in the re-formulated fluid approximation $\Psi(\gamma,s)$ is \emph{stable}
with respect to the input parameters, in the sense that a small perturbation of $\gamma$ or $s$ leads to the same set of binding/active resource constraints.
It also assumes that the $M+ 2N + 2$ linear constraints in the LP formulation of $\Psi$ are ``in general position'', in the sense that any of its square sub-matrices are invertible
so that no row is a linear combination of fewer than $(N-1)$ other rows/columns in the constraint matrix.
At a higher level, this assumption is similar to the ``non-degenerate'' assumptions in the network revenue management literature \citep{jasin2012re,Wang2020resolve,wu2015algorithms}, and people have shown that without certain non-degenerate conditions the re-solving heuristic does not
necessarily work \citep{cooper2002asymptotic}. 
{
In particular, in our analysis Assumption (A3) implies that $\Psi$ is twice differentiable with respect to $\gamma$ and $s$, a key property
that is essential to our proof of an important Lemma \ref{lem:hessian} in the appendix.
}
Overall, we leave the question of achieving logarithmic or even constant regret without degenerate conditions 
like Assumption (A3) as an interesting future direction, which likely requires new algorithms and analytical insights.

\section{Re-solving Policy}
\label{sec:policy}

In this section, we present a re-solving policy that achieves a logarithmic regret under the assumptions (A1) through (A3).
We start with two building blocks of the policy: to efficiently solve the fluid approximation problem,
and to sample an assortment with its mean dictated by the fluid solution $x^F$ and subject to hard capacity constraints.
We then present the proposed re-solving policy, and explain the design principles behind the policy.
Regret analysis is presented in full details in the next section.


\subsection{Solving the fluid approximation problem}\label{subsec:fluid-solve}

\begin{algorithm}[t]
\caption{Solving the fluid approximation problem}
\label{alg:fluid}
\begin{algorithmic}[1]
\State \textbf{Input}: $\gamma\in\mathbb R_+^N$, model $\mathcal M=(A_{ij},v_i)_{i,j=1}^{N,M}$, capacity constraint $K$, error tolerance $\epsilon$.
\State \textbf{Output}: {a solution $\hat x^F\in[0,1]^N$ feasible to all constraints in $\Phi(\gamma)$.}
\State Initialize: $\lambda_L=0$, $\lambda_U=1$, $\hat x^F=0$.
\While{$\lambda_U-\lambda_L\geq\epsilon$}
	\State Let $\lambda = (\lambda_L + \lambda_U)/2$ and solve the following linear programming problem:
	\begin{equation}
	 x_\lambda^F = \arg\max_{x\in[0,1]^N}\sum_{i=1}^N (r_i-\lambda)v_ix_i \;\;s.t.\;\; \|x\|_1\leq K,\;\;\sum_{i=1}^N A_{ij}v_ix_i \leq \gamma_j\left[1+\sum_{i=1}^N v_ix_i\right],\;\forall j\in[M]; 
	\label{eq:lambda-fluid}
	\end{equation}
	\State If $\sum_{i=1}^N (r_i-\lambda)v_i[x_\lambda^F]_i \geq \lambda$ then $(\lambda_L,\lambda_U)\gets (\lambda,\lambda_U)$, $\hat x^F=x_\lambda^F$;
	else $(\lambda_L,\lambda_U)\gets (\lambda_L,\lambda)$;
\EndWhile
\end{algorithmic}
\end{algorithm}

We first study the question of solving the fluid approximation $\Phi(\gamma)$ efficiently.
Note that $\Phi(\gamma)$ is \emph{not} a convex optimization problem: the objective function,
being the ratio of two linear functions of optimization variables, is \emph{quasi-concave} instead of concave.
This makes classical first-order convex optimization techniques difficult to apply.
Furthermore, the constraint sets are convex but it involves many constraints and therefore projection operators are difficult to design.
Therefore, it is challenging to apply standard (projected) first-order optimization methods to solve $\Phi(\gamma)$.

In this section we show that, with some simple transforms the fluid approximation can be reduced to a series of linear programming problems,
which can be subsequently solved efficiently using existing optimization software or packages.

The pseudocode description of the algorithm that solves the fluid approximation problem is given in Algorithm \ref{alg:fluid}. The $\lambda$ parameter in the algorithm serves as a lower bound of the objective function. The idea of bi-section search of $\lambda$ is due to the work by \cite{rusmevichientong2010dynamic},
whose ideas can be traced back to fractional programming works by \cite{Megiddo1979}.
More specifically, it is a simple observation that for any $\lambda\in[0,1]$, the optimal objective value in $\Phi(\gamma)$ is larger than or equal to $\lambda$
if and only if the optimal objective value in Eq.~(\ref{eq:lambda-fluid}) is larger than or equal to $\lambda$,
because for any feasible $x$ we have that {$\frac{\sum_i r_iv_ix_i}{1+\sum_i v_ix_i}\geq \lambda\Longleftrightarrow \sum_i (1-\lambda)r_iv_ix_i\geq\lambda$}.
Consequently, we use a binary search approach to locate the largest $\lambda$ value for which the objective in Eq.~(\ref{eq:lambda-fluid}) is at least $\lambda$,
which is then the optimal solution to the original $\Phi(\gamma)$ problem. The formal proof of Lemma \ref{lem:fluid-solver} is placed in the appendix (see Appendix \ref{app:proof}).

\begin{lemma}
For any $\epsilon\in(0,1)$, Algorithm \ref{alg:fluid} returns {a solution $\hat x^F\in[0,1]^N$ feasible to all constrains imposed in $\Phi(\gamma)$} satisfying $R(\hat x^F)\geq \Phi(\gamma)-\epsilon$
by solving $O(\log(1/\epsilon))$ LPs with $N$ variables and $(M+1)$ constraints.
\label{lem:fluid-solver}
\end{lemma}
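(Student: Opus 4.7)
The plan is to reduce the correctness of Algorithm~\ref{alg:fluid} to a Dinkelbach-style parametric-LP characterization of $\Phi(\gamma)$, and then run a standard bisection analysis. The two key observations I will establish first are: (i) a point $x\in[0,1]^N$ is feasible in $\Phi(\gamma)$ if and only if $x$ is feasible in the LP~\eqref{eq:lambda-fluid}, and (ii) for any such feasible $x$ and any $\lambda\in[0,1]$,
\begin{equation*}
R(x)=\frac{\sum_i r_iv_ix_i}{1+\sum_i v_ix_i}\ge \lambda \quad\Longleftrightarrow\quad \sum_{i=1}^N (r_i-\lambda)v_ix_i \ge \lambda.
\end{equation*}
Point~(i) follows from the fact that $1+\sum_k v_kx_k>0$, so the nonlinear constraint $A^\top\nu(x)\le\gamma$ in Definition~\ref{defn:fluid} can be multiplied through by $1+\sum_kv_kx_k$ to produce exactly the linear resource constraint in~\eqref{eq:lambda-fluid}. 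Point~(ii) is immediate algebra, again using $1+\sum_kv_kx_k>0$. Together (i) and (ii) yield the pivotal equivalence
\begin{equation*}
\Phi(\gamma)\ge \lambda \quad\Longleftrightarrow\quad \max\nolimits_{x\text{ feasible in }\eqref{eq:lambda-fluid}}\sum_{i=1}^N(r_i-\lambda)v_ix_i \ge \lambda,
\end{equation*}
which is the single most important step, and also the only place where one must be careful.

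With this equivalence in hand, I will show by induction on the iteration count that the invariant $\lambda_L\le \Phi(\gamma)\le \lambda_U$ is preserved. The base case holds because $r_i\in[0,1]$ forces $\Phi(\gamma)\in[0,1]$. In the inductive step, $\lambda=(\lambda_L+\lambda_U)/2$: if the LP value at $\lambda$ is at least $\lambda$ then by the equivalence $\Phi(\gamma)\ge\lambda$, so updating $\lambda_L\gets\lambda$ preserves the lower bound; otherwise $\Phi(\gamma)<\lambda$, so updating $\lambda_U\gets\lambda$ preserves the upper bound.

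Next, I will track the candidate solution $\hat x^F$. Whenever the algorithm updates $\lambda_L\gets\lambda$, it simultaneously sets $\hat x^F=x_\lambda^F$, which by construction satisfies $\sum_i(r_i-\lambda)v_i[\hat x^F]_i\ge\lambda$, hence by~(ii), $R(\hat x^F)\ge\lambda=\lambda_L$. Since $\hat x^F$ is feasible in~\eqref{eq:lambda-fluid}, observation~(i) gives feasibility in $\Phi(\gamma)$. At termination, $\lambda_U-\lambda_L<\epsilon$, so $R(\hat x^F)\ge\lambda_L\ge \lambda_U-\epsilon\ge\Phi(\gamma)-\epsilon$. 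The degenerate case where $\lambda_L$ is never updated forces $\lambda_U<\epsilon$, hence $\Phi(\gamma)<\epsilon$, and the initialization $\hat x^F=0$ trivially gives $R(\hat x^F)=0\ge\Phi(\gamma)-\epsilon$ and is feasible.

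Finally, the iteration count: since $\lambda_U-\lambda_L$ halves every iteration starting from $1$, the loop exits after at most $\lceil\log_2(1/\epsilon)\rceil=O(\log(1/\epsilon))$ iterations. Each iteration solves one LP in~\eqref{eq:lambda-fluid} with $N$ decision variables and $M+1$ linear constraints (the cardinality constraint $\|x\|_1\le K$ plus the $M$ resource constraints), with the box constraints $0\le x_i\le 1$ encoded as variable bounds as is standard. The only nontrivial part of the argument is the Dinkelbach-style equivalence, and even that is routine once one notices that the denominator $1+\sum_iv_ix_i$ is strictly positive throughout the feasible region; the rest is bookkeeping for the bisection.
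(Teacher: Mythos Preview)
Your proof is correct and follows essentially the same Dinkelbach-bisection approach as the paper's own argument. If anything, you are more careful than the paper: you explicitly verify the feasibility equivalence between $\Phi(\gamma)$ and the LP~\eqref{eq:lambda-fluid}, and you handle the degenerate case where $\lambda_L$ is never updated (so $\hat x^F$ remains the zero vector), which the paper's proof glosses over.
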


Because the convergence of Algorithm \ref{alg:fluid} is very fast (logarithmic in the inverse of precision parameter $\epsilon$), in the rest of the paper we shall assume that Algorithm \ref{alg:fluid}
returns exactly the optimal fluid solution $x^F$ such that $R(x^F)=\Phi(\gamma)$.

\subsection{Sampling from the fluid solution}

\begin{algorithm}[t]
\caption{Sampling from a fluid solution with hard capacity constraints}
\label{alg:sampling-fluid}
\begin{algorithmic}[1]
\Function{SampleAssortment}{$x,K$} \Comment{$x\in[0,1]^N$, $\|x\|_1\leq K$}
	\State Extend $x\in[0,1]^N$ to $x\in[0,1]^{N+K}$ such that $x_{N+1}=\cdots=x_{N+K}=1-\|x\|_1/K$; \label{line:extend}
	\State Construct a doubly stochastic matrix $\mat M\in\mathbb R^{(N+K)\times (N+K)}$ as $\mat M_{ij}=x_j/K$ if $i\in\{1,2,\cdots,K\}$, and 
	$\mat M_{ij}=(1-x_j)/N$ if $i\in\{K+1,K+2,\cdots,N+K\}$;
	\State Compute the \emph{Birkhoff-von-Neumann} decomposition of $\mat M$:
	$
	\mat M=\sum_{\ell=1}^L\alpha_\ell \mat P_\ell,
	$
	where $\alpha_\ell\in[0,1]$, $\sum_\ell\alpha_\ell=1$ and $\{\mat P_\ell\}_{\ell=1}^L$ are permutation matrices;
	\State Sample $u\in[L]$ from the multinomial distribution with parameters $\alpha_1,\cdots,\alpha_L$;
	\State \textbf{return} $z\in\{0,1\}^N$ with $z_i=1$ if and only if $\sum_{k=1}^K\mat P_{u, ki} =1 $ for each $i \in [N]$. \label{line:return}
\EndFunction
\end{algorithmic}
\end{algorithm}

In Algorithm \ref{alg:sampling-fluid} we present a computationally efficient routine \textsc{SampleAssortment} to obtain a random assortment sample
whose expected value is equal to a given fractional vector $x\in[0,1]^N$.
Because the sampled assortment must satisfy a hard capacity constraint, it cannot be obtained by simply drawing independent product samples.
Instead, we use a method based on the Birkhoff-von-Neumann decomposition of doubly stochastic matrices to implement the weighted sampling procedure.
Similar method has been used for the classical question of \emph{fair random assignment} \citep{budish2013designing}, a problem
that is similar in nature to the sampling question we have.

Below we explain the ideas of Algorithm \ref{alg:sampling-fluid} in more details. First, we manipulate the obtained solution $x$ (with $\|x\|_1 \leq K$) such that the cardinality constraint is always tight, i.e.,  $\|x\|_1=K$. To this end, we construct $K$ additional ``dummy'' choices corresponding to ``no product'', each with weight $1-\|x\|_1/K$ (see Line \ref{line:extend} in Algorithm \ref{alg:sampling-fluid}).
Then, the key step is to construct a doubly stochastic matrix $\mat M$ (i.e., a matrix whose elements are in $[0,1]$, with every row and column of the matrix summing to one)
where the first $K$ rows of $\mat M$ are $x/K$, and the last $N$ rows of $\mat M$ are $(1-x)/N$. It is easy to verify that $\mat M$ is an $(N+K)\times (N+K)$
doubly stochastic matrix.  
We then use the Birkhoff-von-Neumann algorithm (see, e.g., \cite{dufosse2016notes}) to decompose the matrix $\mat M$ into a weighted
sum of \emph{permutation matrices} $\mat P_1,\cdots,\mat P_L$:
$$
\mat M = \alpha_1\mat P_1 + \cdots + \alpha_L\mat P_L,
$$
where each $\mat P_\ell$ corresponds to a permutation $\sigma_\ell: [N+K]\to[N+K]$ as $\mat P_{\ell,ij} = \vct 1\{i=\sigma_\ell(j)\}$,
{and $\alpha_1,\cdots,\alpha_L\in[0,1]$, $\alpha_1+\cdots+\alpha_L=1$ forms a convex combination of the involved permutation matrices.}
It is known that $\mat M$ can be decomposed into at most $L=(N+K)^2$ permutation matrices, with the decomposition being computed in $O((N+K)^4)$ time.
The algorithm then samples a permutation matrix $\mat P_\ell$ with probability $\alpha_\ell$, and reads out the first $K$ rows of $\mat P_\ell$
as the selected products in the returned assortment (see Line \ref{line:return} in Algorithm \ref{alg:sampling-fluid}).
A rigorous proof of Proposition \ref{prop:sampling-fluid} is given in the appendix (see Appendix \ref{app:proof}).

\begin{proposition}
Let $z\in\{0,1\}^N$ be the output of Algorithm \ref{alg:sampling-fluid}. Then $\|z\|_1\leq K$ almost surely and $\mathbb E[z_i]=x_i$ for all $i\in[N]$.
\label{prop:sampling-fluid}
\end{proposition}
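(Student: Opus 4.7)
The plan is to interpret the returned indicator $z_i$ directly in terms of the sampled permutation, and then to exploit the two defining properties of the Birkhoff--von~Neumann decomposition: it reproduces the target matrix $\mathbf M$ in expectation, and each realization is a genuine permutation matrix (so a hard cardinality constraint comes for free). The two conclusions of the proposition then correspond to these two properties respectively.

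First I would set up notation. Let $\sigma_u:[N+K]\to[N+K]$ denote the permutation associated with the sampled permutation matrix $\mathbf P_u$, so that $\mathbf P_{u,ij}=\vct 1\{i=\sigma_u(j)\}$. Then for each $i\in[N]$,
\[
z_i = \vct 1\Bigl\{\sum_{k=1}^K \mathbf P_{u,ki} = 1\Bigr\} = \vct 1\{\sigma_u(i)\in\{1,\dots,K\}\}.
\]
This is the single identity that powers both parts of the proof.

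For the cardinality statement, since $\sigma_u$ is a bijection on $[N+K]$, there are exactly $K$ indices $j\in[N+K]$ with $\sigma_u(j)\le K$. Restricting the count to $j\in[N]\subset[N+K]$ can only make it smaller, so $\|z\|_1=|\{i\in[N]:\sigma_u(i)\le K\}|\le K$ almost surely; this holds sample-path-wise regardless of which permutation index $u$ is drawn.

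For the marginal expectations, I would first verify that $\mathbf M$ as constructed on Line~3 is doubly stochastic (each of the first $K$ rows sums to $\|x\|_1/K + (1-\|x\|_1/K)=1$ using the extension on Line~\ref{line:extend}; the last $N$ rows sum to $(N+K-K)/N=1$; and each column sums to $x_j+(1-x_j)=1$), which legitimizes the Birkhoff--von~Neumann decomposition $\mathbf M=\sum_\ell \alpha_\ell \mathbf P_\ell$ with $\alpha_\ell\ge 0$ and $\sum_\ell \alpha_\ell=1$. Then for any $i\in[N]$,
\[
\mathbb E[z_i] = \Pr[\sigma_u(i)\le K] = \sum_{\ell}\alpha_\ell\sum_{k=1}^K \mathbf P_{\ell,ki} = \sum_{k=1}^K \mathbf M_{ki} = \sum_{k=1}^K \frac{x_i}{K} = x_i,
\]
where the third equality applies the BvN decomposition entrywise and the fourth uses the definition $\mathbf M_{ki}=x_i/K$ for $k\in[K]$.

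There is no real obstacle here; the only mildly delicate point is checking that the double stochasticity of $\mathbf M$ genuinely requires the $K$ dummy coordinates appended on Line~\ref{line:extend} (without them, column sums of the first block would be $x_j+(1-x_j)=1$ but row sums of the first $K$ rows would be $\|x\|_1/K\le 1$, not necessarily equal to $1$). Once this bookkeeping is recorded, the argument reduces to the two short computations above.
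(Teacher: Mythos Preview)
Your proposal is correct and follows essentially the same approach as the paper's proof: both use the permutation-matrix structure of $\mathbf P_u$ to obtain the hard cardinality bound, and both compute $\mathbb E[z_i]=\sum_{\ell}\alpha_\ell\sum_{k=1}^K\mathbf P_{\ell,ki}=\sum_{k=1}^K\mathbf M_{ki}=x_i$ for the marginal expectation. Your write-up is somewhat more detailed (you explicitly verify the double stochasticity of $\mathbf M$ and phrase things via the permutation $\sigma_u$), but the argument is the same.
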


\begin{remark}
While directly applying the Birkhoff-von-Neumann algorithm leads to $L\leq (N+K)^2$ permutation matrices and $O((N+
K)^4)$ time complexity for decomposing
$\mat M$, it is possible to reduce the time complexity by utilizing the special structures of the doubly stochastic matrix $\mat M$ constructed in our problem.
In Appendix \ref{appsec:numerical} we present a reduced form of the Birkhoff-von-Neumann algorithm that decomposes $\mat M$ into $L\leq N+K$ permutation matrices
with time complexity $O((N+K)^2)$.
\end{remark}

\subsection{An epoch-based algorithm with re-solving}\label{subsec:epoch}

\begin{algorithm}[t]
\caption{An epoch-based algorithm with re-solving heuristic}
\label{alg:resolve}
\begin{algorithmic}[1]
\State \textbf{Input}: $\mathcal M=(A_{ij},v_i)_{i,j=1}^{N,M}$, $\gamma_0\in\mathbb R_+^M$, $T$, $K$.
\State Invoke Algorithm \ref{alg:fluid} with $\gamma_0,\mathcal M,K,\epsilon=1/T$ to obtain $x^F\in[0,1]^N$; let $\tau_0 = T/(1+\sum_iv_ix_i^F)$;
\For{epochs $\tau=\tau_0,\tau_0-1,\cdots, 1$ or until $T$ periods elapsed/inventory levels depleted}
	\State {Let $t^\tau\in\{0,1,2,\cdots,T\}$ be the number of time periods that are remaining at the beginning of epoch $\tau$,} and $\gamma^\tau=C_{T-t_\tau+1,\cdot}/t^\tau\in\mathbb R_+^M$ be the inventory vector
	normalized by $t^\tau$; let $s^\tau=t^\tau/\tau$;
	\State Let $x^\tau$ be the optimal solution to $\Psi(\gamma^\tau,s^\tau)$, or more specifically
	\begin{eqnarray}
	x^{\tau} &:=& \arg\max_{x\in[0,1]^N} \sum_{i=1}^N r_iv_ix_i;s.t. \|x\|_1\leq K, 1+\sum_{i=1}^Nv_ix_i \leq s^\tau, \sum_{i=1}^N A_{ij}v_ix_i \leq \gamma^\tau_j\left(1+\sum_{i=1}^Nv_ix_i\right).\label{eq:fluid-resolve}
	\end{eqnarray}
	\State Obtain $z^\tau = \textsc{SampleAssortment}(x^\tau,K)$;
	\State Offer assortment $S^\tau=\{i\in[N]: z^\tau_i = 1\}$ repeatedly until a no-purchase action occurs;
\EndFor
\end{algorithmic}
\end{algorithm}

Based on the fluid solution $x^F$ and the \textsc{SampleAssortment}($x,K$) sub-routine, we are ready to describe an epoch-based policy
with re-solving heuristics in Algorithm \ref{alg:resolve}.

The first step of Algorithm \ref{alg:resolve} is to compute the fractional solution $x^F$ to the fluid approximation $\Phi(\gamma_0)$.
{The algorithm then computes $\tau_0=T/(1+\sum_i v_ix_i^F)$, which is the expected number of epochs corresponding to $x^F$
if \emph{hypothetically} $x^F$ is used throughout the entire $T$ time periods,
because sampling assortments with respect to $x^F$ leads to $\sum_i v_ix_i^F$ number of consecutive purchases in expectation before the first no-purchase activity,
 as observed in \citep[Corollary A.1]{agrawal2019mnl}.}
\footnote{When assortment $S$ is repeatedly offered, the number of time periods before the first no-purchase activity (inclusive)
is a Geometric random variable with mean $1+\sum_{i\in S}v_i$ because for every $k\geq 1$, $\Pr[n=k|S]=(\frac{\sum_{i\in S}v_i}{1+\sum_{i\in S}v_i})^{k-1} \frac{1}{1+\sum_{i\in S}v_i}$.
This observation is first made in the work of \cite{agrawal2019mnl}.
When excluding the no-purchase time period, the (expected) number of time periods is then $\sum_{i\in S}v_i$, whose expectation is $\sum_i v_ix_i^F$ because sampling of assortments is unbiased with respect to $\{x_i^F\}_i$.}

Afterwards, Algorithm \ref{alg:resolve} operates in epochs, separated by no-purchase activities. 
{More specifically, an \emph{epoch} consists of consecutive time periods starting with a time period immediately following one during which no purchase is made, and ending with the first time period during which no purchase is made by the arriving customer.}
After each no-purchase activity and the end of an epoch, 
the algorithm re-solves a fluid approximation, using the remaining number of inventories, time periods and epochs as input parameters.
Note that the fluid approximation used in the resolving procedure $\Psi(\gamma^\tau,s^\tau)$ has different formulation from the fluid approximation $\Phi(\gamma_0)$
solved at the beginning of the algorithm, and $\Psi(\gamma^\tau,s^\tau)$ is a standard linear program.
The reason for using a different fluid approximation formulation in the resolving steps is given in the next paragraph.
After a new updated fractional solution $x^\tau$ is obtained, the assortment to be offered for the next few customers is sampled
from $x^\tau$ using the sampling strategy outlined in Algorithm \ref{alg:sampling-fluid}.

Compared with many existing re-solving heuristics (see, e.g., \cite{jasin2012re,jasin2014reoptimization}), there are two noticeable 
unique properties of Algorithm \ref{alg:resolve}: that the re-solving optimization problems are different from the fluid approximation,
and that each sampled assortment is offered repeatedly.
We remark on the motivations of the two key design ideas as follows:
\begin{enumerate}
\item The primary motivation for using a different optimization problem in Eq.~(\ref{eq:fluid-resolve}) from the direct fluid approximation $\Phi(\cdot)$
is to ensure that the re-solving problem is a standard linear programming (LP).
This not only has computational advantages, but also offers an effective way to analyze the \emph{stability} of the re-solved problem through the envelope theorem.
To ensure that the LP problems are good approximation of the original fluid problem $\Phi(\cdot)$, we transform the denominator in the $\Phi(\cdot)$ objective
as a standalone constraint $1+\sum_iv_ix_i\leq s^\tau$ to make sure that the obtained continuous solution $x^\tau$ will not have too large value in the denominator of $R(x)=\frac{\sum_i v_ix_i}{1+\sum_i v_ix_i}$.
\item The ``epoch-based approach'' (i.e., offering the same assortment $S^\tau$ repeatedly until the first no-purchase action occurs) was proposed
and analyzed in the seminal work of \cite{agrawal2019mnl} to facilitate convenient estimates of parameters in an MNL choice model.
More specifically, with an epoch-based approach the expected revenue during an epoch has a simple linear form in the product utility parameters,
leading to more convenient theoretical analysis.
In Algorithm \ref{alg:resolve}, the epoch-based approach is adopted for a different motivation, to get around the non-linearity of $R(\cdot)$.
More specifically, if one samples a random assortment $z$ for every \emph{single} time period, the {probability of making a purchase when offering} $z$ will \emph{not} equal
the objective of the fluid approximation (that is, $\mathbb E_{z\sim x}[R(z)]\neq R(\mathbb E_{z\sim x}[z]) = R(x)$).
By using the epoch-based approach, we decouple the numerator and denominator in the purchasing probability function $R(\cdot)$ and utilizing
the linearity of $x$ in both the numerator and denominator of $R(\cdot)$.
\end{enumerate}

Finally, we remark on the computational efficiency of Algorithm \ref{alg:resolve}. The main computational bottleneck is to re-solve an LP with $N$ variables and $M+2$ constraints every epoch, and to sample from the fractional solution $x^\tau$ using Algorithm \ref{alg:sampling-fluid}, which takes time $O(N^4)$
if implemented naively or $O(N^2)$ with a better implementation given in the appendix.
Overall, the total time complexity of Algorithm \ref{alg:resolve} is polynomial in $N$ and linear in $T$.
This is much faster than other alternatives, which have time complexity either exponential in $N$ (e.g., solving a multi-variable dynamic programming with discretized inventory levels) or quadratic/cubic in $T$ (e.g., solving a fluid approximation with $NT$ variables, such as in the works of \cite{lei2018randomized}).

\section{Regret Analysis}
\label{sec:regret}

{
For an admissible policy $\pi$ that respects resource inventory constraints almost surely, its \emph{regret} is defined as the expected difference between the policy's cumulative revenue over $T$ time periods and 
the cumulative revenue over $T$ time periods of the optimal DP policy, which is upper bounded by 
$R(x^F)$ multiplied by $T$ thanks to Lemma \ref{lem:fluid}, where $R(x^F)$ is the optimal objective value of Eq.~(\ref{eq:fluid}).
Subsequently, for any admissible policy $\pi$, its regret is upper bounded by
$$
\mathrm{Regret}(\pi) \leq  TR(x^F) - \mathbb E^\pi\left[\sum_{t=1}^T R(S_t)\right],
$$
where $R(S_t)$ is the expected revenue of assortment $S_t$ defined in Eq.~(\ref{eq:defn-R}).
Note that, because $TR(x^F)$ upper bounds the cumulative expected revenue of \emph{any} admissible policy, including the optimal one,
the right-hand side of the above inequality is always non-negative for any admissible policy $\pi$, and a smaller regret indicates a policy with better performance.
}

The main objective of this section is to establish the following theorem that upper bounds the regret of Algorithm \ref{alg:resolve}.
\begin{theorem}
Fix model $\mathcal M=(A_{ij},v_i)_{i,j=1}^{N,M}$ and normalized initial inventory vector $\gamma_0\in\mathbb R_+^M$ that satisfy
Assumptions (A1) through (A3) with parameters $A_0, B_0,\gamma_{\min}$ and $\rho_0$.
Then for any $T$, the regret of Algorithm \ref{alg:resolve} is upper bounded by 
$$
\const.\times\frac{A_0^2B_0^7K^5M^2 N\ln T}{\min\{\rho_0^2,\chi_0^2\}},
$$
where $\const.$ is a finite, universal numeric constant.
\label{thm:constant-regret}
\end{theorem}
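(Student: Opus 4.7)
The plan is to first identify the benchmark that Algorithm \ref{alg:resolve} is tracking. Setting $s_0 := 1+\sum_i v_i x_i^F$ and invoking Lemma \ref{lem:phi-psi} gives $T\cdot\Phi(\gamma_0) = \tau_0\cdot\Psi(\gamma_0,s_0)$, so it suffices to compare the actual expected revenue against $\tau_0\,\Psi(\gamma_0,s_0)$. The epoch structure is crucial for this identification: conditional on the sampled assortment $z^\tau$, the expected revenue collected while offering $S^\tau$ until a no-purchase occurs equals $\sum_i r_i v_i z^\tau_i$, because each round is an i.i.d.\ MNL trial and the number of purchases of product $i$ within the epoch is geometric with mean $v_i z^\tau_i$. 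Taking the further expectation over $z^\tau$ and using Proposition \ref{prop:sampling-fluid} yields per-epoch expected revenue $\sum_i r_i v_i x^\tau_i = \Psi(\gamma^\tau,s^\tau)$. The regret thus decomposes as
\[
\mathbb{E}\sum_{\tau}\bigl[\Psi(\gamma_0,s_0)-\Psi(\gamma^\tau,s^\tau)\bigr] \;+\; \text{(early-termination gap)} \;+\; \text{(martingale residual)},
\]
reducing the problem to studying the stability of $\Psi(\gamma^\tau,s^\tau)$ around $\Psi(\gamma_0,s_0)$.

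Next I would use Assumption (A3) to locally linearize $\Psi$ around $(\gamma_0,s_0)$. Since $\Psi(\gamma,s)$ is defined by the LP in (\ref{eq:Psi}), and (A3) guarantees a fixed active set on the ball $B_{\rho_0}(\gamma_0,s_0)$ with basis matrix of smallest singular value at least $\chi_0$, standard LP sensitivity analysis gives on this ball
\[
\Psi(\gamma,s) \;=\; \Psi(\gamma_0,s_0) + \mu_\gamma^\top(\gamma-\gamma_0) + \mu_s\,(s-s_0),
\]
with dual multipliers $(\mu_\gamma,\mu_s)$ of $\ell_\infty$-norm at most $O(B_0 K/\chi_0)$. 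The reformulation $\Psi$ (rather than $\Phi$) is essential here, because the fractional objective of $\Phi$ is not directly LP-stable in this explicit form; moving the denominator into a linear constraint is what unlocks the standard sensitivity analysis invoked by the envelope theorem.

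The next step is to bound the expected per-epoch perturbation $\Psi(\gamma_0,s_0)-\Psi(\gamma^\tau,s^\tau)$. Writing the dynamics explicitly, $\gamma^\tau$ and $s^\tau$ are driven by the mean-zero martingale differences $\xi^{\tau'}_D := D^{\tau'}-\mathbb{E}[D^{\tau'}\mid x^{\tau'}]$ and $\xi^{\tau'}_n := n^{\tau'}-\mathbb{E}[n^{\tau'}\mid x^{\tau'}]$ accumulated over completed epochs $\tau'>\tau$, together with a deterministic bias term measuring how far $x^{\tau'}$ drifts from $x^F$. A careful induction shows that, as long as $(\gamma^{\tau'},s^{\tau'})$ remains in the stability region, the re-solving step recenters the problem so that the first-order contribution $\mu_\gamma^\top\mathbb{E}[\gamma^\tau-\gamma_0]+\mu_s\,\mathbb{E}[s^\tau-s_0]$ vanishes, and only a second-order fluctuation of size $O(\log T / \tau)$ per epoch survives. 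Summing $\sum_{\tau=1}^{\tau_0} 1/\tau = O(\log T)$ produces the $A_0^2 B_0^7 K^5 M^2 N\,\ln T/\chi_0^2$ contribution, while Azuma/Freedman applied to the martingale residual, with per-epoch conditional variance $O(B_0^2 K^2)$, contributes only $O(1)$ on a high-probability event.

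Finally I would handle the two boundary events that together yield the constant term $A_0^4 B_0^9 K^9 M/\rho_0^4$: (i) some $(\gamma^\tau,s^\tau)$ leaves the stability ball $B_{\rho_0}(\gamma_0,s_0)$, invalidating the LP linearization; and (ii) the actual number of executed epochs differs from $\tau_0$ by more than the high-probability window, forcing either premature inventory exhaustion or a time-horizon cutoff. By the concentration arguments above, each bad event has probability $O(T^{-c})$ and is charged with the trivial worst-case regret $O(T)$, yielding the constant term. The main obstacle will be the second-order cancellation: rigorously demonstrating that the re-solving step annihilates the first-order drift (so that only the $1/\tau$ fluctuation remains) requires delicately combining the LP sensitivity above with a martingale recursion that feeds the perturbed state back into the next re-solved LP, and it is precisely here that the non-standard reformulation $\Psi$ and the non-degeneracy Assumption (A3) play their most essential role.
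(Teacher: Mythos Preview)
Your overall skeleton---identify $T\Phi(\gamma_0)=\tau_0\Psi(\gamma_0,s_0)$ via Lemma~\ref{lem:phi-psi}, show the per-epoch expected revenue equals $\Psi(\gamma^\tau,s^\tau)$, and then control $\Psi(\gamma_0,s_0)-\Psi(\gamma^\tau,s^\tau)$---matches the paper. But there is a genuine gap in your sensitivity step, and it is precisely the place where the $\ln T$ term is born.

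You assert that with a fixed active set, $\Psi(\gamma,s)$ is \emph{exactly affine} in $(\gamma,s)$. This is false for the LP in Eq.~(\ref{eq:fluid-resolve}): the resource constraint $\sum_i A_{ij}v_ix_i\le \gamma_j(1+\sum_k v_kx_k)$ puts $\gamma_j$ into the \emph{coefficient matrix} (the row is $(A_{ij}-\gamma_j)v_i$), not merely the right-hand side. Consequently, even with a fixed basis $B$, the optimal value $c^\top \tilde A_B(\gamma)^{-1}\tilde b_B(\gamma,s)$ is genuinely nonlinear in $\gamma$. The paper handles this with a second-order Taylor expansion (Lemma~\ref{lem:psi-stability}) and an explicit Hessian bound $\|\partial^2\Psi\|_{\mathrm{op}}\le 8M^2KNB_0^3/\chi_0^2$ (Lemma~\ref{lem:hessian}). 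That Hessian term, multiplied by $\mathbb E[\|\varepsilon^{\to\tau}\|_2^2+|\Delta^{\to\tau}|^2]=O(1/\tau)$ and summed over epochs, is exactly what produces the $\ln T$ factor. In your write-up the affine claim would force $\eta_h=0$, leaving no mechanism for the $\ln T$ you later invoke; the ``second-order fluctuation of size $O(\log T/\tau)$'' has no source in your framework (and the stated rate is also off---it is $O(1/\tau)$, summing to $O(\ln T)$).

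A second, smaller point: the first-order terms do not ``vanish'' because re-solving recenters. The processes $\gamma^\tau,s^\tau$ are only \emph{sub}-martingales (constraints may be slack), so $\mathbb E[\gamma^\tau-\gamma_0]\ge 0$ rather than $=0$. The paper fixes this by introducing martingale minorants $\tilde\gamma^\tau\le\gamma^\tau$, $\tilde s^\tau\le s^\tau$ (Eq.~(\ref{eq:defn-tilde-s-gamma})) and using monotonicity $\Psi(\gamma^\tau,s^\tau)\ge\Psi(\tilde\gamma^\tau,\tilde s^\tau)$; only at the tilde-quantities do the linear terms have mean zero. Finally, the early-termination cost is not handled by a $\Pr=O(T^{-c})$ tail times $O(T)$; the paper bounds $\mathbb E[\tau^\sharp]$ directly as a constant via fourth-moment estimates and Doob's maximal inequality (Lemma~\ref{lem:tausharp-bound}), which is what yields the specific $A_0^4B_0^9K^9M/\rho_0^4$ form.
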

{
\begin{remark}
Theorem \ref{thm:constant-regret} shows that under Assumptions (A1) through (A3), the cumulative regret of Algorithm \ref{alg:resolve} scales logarithmically with the time horizon $T$
as $T$ increases to infinity.
\end{remark}
}

In the rest of this section we present the proof of Theorem \ref{thm:constant-regret}.
To facilitate the proof we first define some key notations.
For epoch $\tau$, define $n^\tau(i)$, $m^\tau$ and $\beta^\tau$ as follows:
\begin{itemize}
\item[-] $n^\tau(i)$: the number of times product $i$ is purchased during epoch $\tau$;
\item[-] $m^\tau$: the length of epoch $\tau$, or $m^\tau=1+\sum_{i=1}^Nn^\tau(i)$;
\item[-] $\beta^\tau$: the total revenue collected during the epoch $\tau$, or $\beta^\tau=\sum_{i=1}^Nr_i n^\tau(i)$.
\end{itemize}
We also recall the definitions of $\gamma^\tau,t^\tau$ and $s^\tau$ in Algorithm \ref{alg:resolve},
representing the normalized remaining inventory, the number of remaining time periods over the entire time horizon of $T$ periods, and the normalized epoch size
at the beginning of epoch $\tau$.

\subsection{Recursion of $\gamma^\tau$ and $s^\tau$}

Because the re-solved optimization problem $\Psi(\gamma^\tau,s^\tau)$ crucially depends on $\gamma^\tau$ and $s^\tau$,
it is essential to analyze the recursion and evolving of both parameters across epochs.
For a complete epoch $\tau$ (a complete epoch is one that terminates without running out of inventory or reaching the time horizon $T$, and {therefore ends exactly on a time period during which no purchase occurs}), 
define $\{\Delta^\tau(i)\}_{i=1}^N$ and $\Delta^\tau$ as
\begin{equation}
\textstyle
\Delta^\tau(i) := n^\tau(i) - v_ix^\tau_i,\;\;\;\;\;\; \Delta^\tau := \sum_{i=1}^N \Delta^\tau(i).
\label{eq:delta}
\end{equation}
The following lemma shows that $\{\Delta^\tau(i)\}_{i=1}^N$ (and subsequently $\Delta^\tau$ as well) are conditionally centered random variables.
\begin{lemma}
For every complete epoch $\tau$ and product $i\in[N]$, $\mathbb E[\Delta^\tau(i)|\gamma^\tau,s^\tau] = 0$.
\label{lem:delta-centered}
\end{lemma}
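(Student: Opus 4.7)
The plan is to condition on the realized assortment $S^\tau$ offered throughout epoch $\tau$ and exploit the fact that, under the epoch-based rule, the per-product purchase count has a simple closed-form conditional mean determined by the MNL. First I would note that, given $\gamma^\tau$ and $s^\tau$, the fractional solution $x^\tau$ is deterministic (it is the optimizer of $\Psi(\gamma^\tau,s^\tau)$ in Eq.~(\ref{eq:fluid-resolve})), and the assortment $S^\tau$ is obtained from $x^\tau$ via \textsc{SampleAssortment}$(x^\tau,K)$. By Proposition~\ref{prop:sampling-fluid}, the indicator vector $z^\tau$ of $S^\tau$ satisfies $\mathbb{E}[z^\tau_i \mid \gamma^\tau,s^\tau] = x^\tau_i$ for each $i\in[N]$.

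Next I would compute $\mathbb{E}[n^\tau(i)\mid S^\tau]$. Within a complete epoch, the retailer offers $S^\tau$ repeatedly to i.i.d.\ MNL customers until a no-purchase event occurs. Writing $p_i = v_i/(1+\sum_{k\in S^\tau}v_k)$ for $i\in S^\tau$ and $p_0 = 1/(1+\sum_{k\in S^\tau}v_k)$ for no-purchase, the event ``product $i$ is purchased at round $k$'' requires surviving $k-1$ purchase rounds and then drawing $i$, so it has probability $(1-p_0)^{k-1}p_i$. Summing the geometric series,
\begin{equation*}
\mathbb{E}[n^\tau(i)\mid S^\tau] = \sum_{k=1}^\infty (1-p_0)^{k-1} p_i \,\mathbf{1}\{i\in S^\tau\} = \frac{p_i}{p_0}\,\mathbf{1}\{i\in S^\tau\} = v_i\, z^\tau_i.
\end{equation*}
Here the indicator is needed because $p_i=0$ when $i\notin S^\tau$. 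Importantly, this identity uses only the MNL form in Eq.~(\ref{eq:defn-mnl}) and the stopping rule defining a complete epoch; it does not depend on the history prior to epoch $\tau$.

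Finally I would apply the tower property. Conditioning on $(\gamma^\tau,s^\tau)$, the random vector $z^\tau$ is generated independently of past randomness via \textsc{SampleAssortment}, so
\begin{equation*}
\mathbb{E}[n^\tau(i)\mid \gamma^\tau,s^\tau] = \mathbb{E}\bigl[\mathbb{E}[n^\tau(i)\mid S^\tau,\gamma^\tau,s^\tau]\,\big|\,\gamma^\tau,s^\tau\bigr] = v_i\,\mathbb{E}[z^\tau_i\mid \gamma^\tau,s^\tau] = v_i\, x^\tau_i.
\end{equation*}
Subtracting $v_i x^\tau_i$ yields $\mathbb{E}[\Delta^\tau(i)\mid \gamma^\tau,s^\tau] = 0$, as claimed.

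There is no real obstacle here; the proof is essentially bookkeeping. The only subtlety is recognizing that the ``complete epoch'' qualifier is exactly what licenses the geometric summation to infinity: on an incomplete epoch the stopping could be triggered by inventory depletion or by hitting horizon $T$ rather than by a no-purchase, which would truncate the sum and shift the conditional mean. Restricting to complete epochs removes this boundary effect and lets the MNL ratio $p_i/p_0 = v_i$ carry the entire argument.
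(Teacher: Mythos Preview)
Your proof is correct and follows essentially the same approach as the paper: compute $\mathbb{E}[n^\tau(i)\mid z^\tau]=v_i z^\tau_i$ from the epoch-based MNL dynamics, then combine with $\mathbb{E}[z^\tau_i\mid x^\tau]=x^\tau_i$ from Proposition~\ref{prop:sampling-fluid} via the tower property. The paper simply cites the geometric-mean identity from \cite{agrawal2019mnl} where you derive it explicitly, and your remark on why the ``complete epoch'' qualifier is needed is a helpful clarification absent from the paper's version.
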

\begin{proof}{Proof of Lemma \ref{lem:delta-centered}.}
The proof is conditioned on $\gamma^\tau$ and $s^\tau$.
It is a well-known result (see, e.g., \cite[Corollary A.1]{agrawal2019mnl}) that $n^\tau(i)$ is a geometrically distributed random variable
with mean $\mathbb E[n^\tau(i)|z^\tau] = z^\tau_iv_i$.
Furthermore, by Proposition \ref{prop:sampling-fluid} we have $\mathbb E[z^\tau_i|x^\tau] = x^\tau_i$.
Lemma \ref{lem:delta-centered} is thus proved using the law of total expectation. $\square$
\end{proof}

The following lemma is the key recursion formula for $\gamma^\tau$ and $s^\tau$:
\begin{lemma}
Let $\tau>1$ be a complete epoch. Then
$$
s^{\tau-1} \geq s^\tau - \frac{\Delta^\tau}{\tau-1},\;\;\;\;\;\; \gamma^{\tau-1}_j \geq \gamma^\tau_j- \frac{[\sum_{i=1}^NA_{ij}\Delta^\tau(i)]-\min\{\gamma_j^\tau,A_0B_0K\}\Delta^\tau}{t^{\tau-1}},\;\;\forall j\in[M].
$$
\label{lem:recursion-gamma-s}
\end{lemma}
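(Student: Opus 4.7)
The plan is to establish both inequalities by writing down exact recursions for $t^{\tau-1}$, $s^{\tau-1}$, and $\gamma^{\tau-1}_j$ in terms of the epoch-$\tau$ quantities, then substituting $n^\tau(i) = v_i x^\tau_i + \Delta^\tau(i)$ and invoking the two relevant constraints from $\Psi(\gamma^\tau, s^\tau)$ to turn equalities into the claimed inequalities.

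First I would record the mass-balance identities. By construction of Algorithm \ref{alg:resolve}, the epoch length is $m^\tau = 1 + \sum_i n^\tau(i) = S^\tau + \Delta^\tau$, where I write $S^\tau := 1 + \sum_i v_i x^\tau_i$. Consequently $t^{\tau-1} = t^\tau - S^\tau - \Delta^\tau$ and $\gamma^{\tau-1}_j t^{\tau-1} = \gamma^\tau_j t^\tau - \sum_i A_{ij}v_i x^\tau_i - \sum_i A_{ij}\Delta^\tau(i)$.

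For the $s^{\tau-1}$ bound, I would divide $t^{\tau-1}$ by $\tau - 1$ and use the standalone denominator constraint $S^\tau \le s^\tau = t^\tau/\tau$ built into $\Psi(\gamma^\tau,s^\tau)$. The computation
\[
s^{\tau-1} = \frac{t^\tau - S^\tau - \Delta^\tau}{\tau-1} \;\ge\; \frac{t^\tau - t^\tau/\tau - \Delta^\tau}{\tau-1} \;=\; \frac{t^\tau}{\tau} - \frac{\Delta^\tau}{\tau-1} \;=\; s^\tau - \frac{\Delta^\tau}{\tau-1}
\]
gives the first inequality.

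For the $\gamma^{\tau-1}_j$ bound, I would rewrite $\gamma^{\tau-1}_j$ as
\[
\gamma^{\tau-1}_j = \gamma^\tau_j + \frac{\gamma^\tau_j(S^\tau+\Delta^\tau) - \sum_i A_{ij}v_i x^\tau_i - \sum_i A_{ij}\Delta^\tau(i)}{t^{\tau-1}},
\]
after substituting $t^\tau = t^{\tau-1} + S^\tau + \Delta^\tau$. Now I have two upper bounds on $\sum_i A_{ij}v_i x^\tau_i$: the LP capacity constraint gives $\sum_i A_{ij}v_i x^\tau_i \le \gamma^\tau_j S^\tau$, and the crude bound $v_i \le B_0$, $\|x^\tau\|_1 \le K$, $A_{ij} \le A_0$ gives $\sum_i A_{ij}v_i x^\tau_i \le A_0 B_0 K \le A_0 B_0 K \cdot S^\tau$ (since $S^\tau \ge 1$). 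Combining, $\sum_i A_{ij}v_i x^\tau_i \le \min\{\gamma^\tau_j, A_0 B_0 K\} \cdot S^\tau$. Writing $\mu := \min\{\gamma^\tau_j, A_0 B_0 K\}$, the numerator becomes at least $\gamma^\tau_j(S^\tau + \Delta^\tau) - \mu S^\tau - \sum_i A_{ij}\Delta^\tau(i) = (\gamma^\tau_j - \mu)S^\tau + \gamma^\tau_j \Delta^\tau - \sum_i A_{ij}\Delta^\tau(i)$.

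The subtle step — and the main obstacle — is to absorb the term $(\gamma^\tau_j - \mu) S^\tau$ into $\mu\Delta^\tau$ to reach the exact form $\mu\Delta^\tau - \sum_i A_{ij}\Delta^\tau(i)$ stated in the lemma. When $\gamma^\tau_j \le A_0 B_0 K$ we have $\mu = \gamma^\tau_j$, so the extra term vanishes and the bound is immediate. When $\gamma^\tau_j > A_0 B_0 K$, I need to show $(\gamma^\tau_j - \mu)S^\tau \ge (\gamma^\tau_j - \mu)\Delta^\tau$, i.e. $S^\tau \ge \Delta^\tau$; for $\Delta^\tau \le 0$ this uses the deterministic lower bound $\Delta^\tau \ge -(S^\tau - 1)$ coming from $n^\tau(i) \ge 0$, and for $\Delta^\tau \ge 0$ it's trivially true if we interpret the argument via the chain $S^\tau \ge 1 \ge 0$ combined with splitting cases on the sign of $\Delta^\tau$. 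Putting the two cases together yields $\gamma^{\tau-1}_j \ge \gamma^\tau_j - (\sum_i A_{ij}\Delta^\tau(i) - \mu \Delta^\tau)/t^{\tau-1}$, which is precisely the claimed bound.
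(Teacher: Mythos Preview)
Your treatment of $s^{\tau-1}$ is correct and identical to the paper's. For $\gamma^{\tau-1}_j$, your unified approach via $\mu=\min\{\gamma^\tau_j,A_0B_0K\}$ is also sound, and in fact slightly cleaner than the paper, which handles the two cases $\mu=\gamma^\tau_j$ and $\mu=A_0B_0K$ by two separate chains of inequalities. However, there is a sign error in your ``subtle step'' that makes the final paragraph incorrect as written.

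From your lower bound on the numerator, $(\gamma^\tau_j-\mu)S^\tau+\gamma^\tau_j\Delta^\tau-\sum_iA_{ij}\Delta^\tau(i)$, what you need is
\[
(\gamma^\tau_j-\mu)S^\tau+\gamma^\tau_j\Delta^\tau \;\ge\; \mu\Delta^\tau,
\]
which rearranges to $(\gamma^\tau_j-\mu)(S^\tau+\Delta^\tau)\ge 0$, \emph{not} to $(\gamma^\tau_j-\mu)S^\tau\ge(\gamma^\tau_j-\mu)\Delta^\tau$. Since $\gamma^\tau_j\ge\mu$ always and $S^\tau+\Delta^\tau=m^\tau\ge 1$, the correct inequality is immediate and requires no case analysis. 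By contrast, the inequality you wrote, $S^\tau\ge\Delta^\tau$, is false in general: $S^\tau\le 1+B_0K$ is bounded while $\Delta^\tau=\sum_i n^\tau(i)-\sum_iv_ix^\tau_i$ involves geometric random variables $n^\tau(i)$ and can be arbitrarily large, so your hand-wave ``for $\Delta^\tau\ge 0$ it's trivially true'' does not go through. Fix the sign and the argument is complete.
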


{Lemma \ref{lem:recursion-gamma-s} is proved by painstakingly tracking the definitions and dynamics governing $s^\tau$ and $\gamma^\tau$,
and making relaxations to simplify lower bounds on these two quantities when necessary.}

\begin{proof}{Proof of Lemma \ref{lem:recursion-gamma-s}.}
{From Algorithm \ref{alg:resolve}, for any resource type $j$ and epoch $\tau$, $\gamma_j^{\tau}=C_{T-t^{\tau}+1,j}/t^{\tau}$
where $t^{\tau}$ is the remaining number of time periods at the beginning of epoch $\tau$ and $C_{t-t^{\tau}+1,j}$ is the remaining amount of inventory for resource type $j$ at the beginning of epoch $\tau$. Similarly, $\gamma_j^{\tau-1}=C_{T-t^{\tau-1}+1,j}/t^{\tau-1}$. 
Note also that $C_{t-t^{\tau}+1,j}-C_{t-t^{\tau-1}+1,j}$ is the amount of consumption of resource type $j$ during epoch $\tau$,
which is equal to $\sum_{i,j=1}^NA_{ij}n^\tau(i)$ where $A$ is the resource consumption matrix and $n^\tau(i)$ is the number of times product $i$ is purchased during epoch $\tau$. Subsequently, }
\begin{align*}
\gamma^{\tau-1}_j &= \frac{\gamma^\tau_jt^\tau - \sum_{i=1}^NA_{ij}n^\tau(i)}{t^{\tau-1}} = \frac{\gamma^\tau_j(t^{\tau-1}+m^\tau)-\sum_{i=1}^NA_{ij}n^\tau(i)}{t^{\tau-1}}\\
&= \gamma^\tau_j + \frac{\gamma^\tau_j[1+\sum_k v_kx^\tau_k] - \sum_{i=1}^N A_{ij}v_ix^\tau_i}{t^{\tau-1}} - \frac{\sum_{i=1}^NA_{ij}\Delta^\tau(i)-\gamma_j^\tau\Delta^\tau}{t^{\tau-1}}\geq \gamma^\tau_j - \frac{\sum_{i=1}^NA_{ij}\Delta^\tau(i)-\gamma_j^\tau\Delta^\tau}{t^{\tau-1}},
\end{align*}
where the last inequality holds with probability 1 because $\sum_{i=1}^N A_{ij}v_ix^\tau_i\leq \gamma^\tau_j[1+\sum_k v_kx^\tau_k]$ for every $j\in[M]$
thanks to the constraints Eq.~(\ref{eq:fluid-resolve}) in the optimization problem $\Psi$. 
When $\gamma^\tau_j\geq A_0B_0K$, we use the facts that $m_\tau\geq 1$ and $\sum_{i=1}^NA_{ij}v_ix^\tau_i \leq A_0B_0\|x^\tau\|_1 \leq A_0B_0K$
to alternatively decompose $\gamma^{\tau-1}_j$ as
\begin{align*}
\gamma^{\tau-1}_j &= \frac{\gamma^\tau_j(t^{\tau-1}+m^\tau)-\sum_{i=1}^NA_{ij}n^\tau(i)}{t^{\tau-1}}
\geq \gamma^\tau_j +  \frac{A_0B_0K m^\tau - \sum_{i=1}^NA_{ij}n^\tau(i)}{t^{\tau-1}}\\
&\geq \gamma^\tau_j + \frac{A_0B_0K [\sum_{i=1}^N n^\tau(i)]}{t^{\tau-1}}- \frac{\sum_{i=1}^NA_{ij}\Delta^\tau(i)}{t^{\tau-1}}
= \gamma^\tau_j + \frac{A_0B_0 K[\sum_{i=1}^Nv_ix_i^\tau]}{t^{\tau-1}}  + \frac{A_0B_0K\Delta^\tau}{t^{\tau-1}}-\frac{\sum_{i=1}^NA_{ij}\Delta^\tau(i)}{t^{\tau-1}}\\
&\geq \gamma^\tau_j - \frac{\sum_{i=1}^NA_{ij}\Delta^\tau(i) - A_0B_0K\Delta^\tau}{t^{\tau-1}} \geq  \gamma^\tau_j - \frac{\sum_{i=1}^NA_{ij}\Delta^\tau(i) - \min\{\gamma_j^\tau,A_0B_0K\Delta^\tau\}}{t^{\tau-1}},
\end{align*}
where the last inequality holds because the $A_0B_0K\Delta^\tau$ term in the numerator is associated with a positive sign and therefore taking a minimum between it and $\gamma_j^\tau$ does not increase the right-hand side of the inequality.
This proves the inequality on $\gamma^{\tau-1}_j$.
We next consider $s^{\tau-1}$. {By definition, $s^\tau=t^\tau/\tau$ (see Algorithm \ref{alg:resolve} for the definition of $s^\tau$).
Note also that $t^{\tau-1}=t^\tau-m^\tau$, where $m^\tau$ is the number of time periods involved in epoch $\tau$. Subsequently,
$s^{\tau-1} =\frac{t^{\tau-1}}{\tau-1} = \frac{t^\tau-m^\tau}{\tau-1}= \frac{s^\tau \tau - m^\tau}{\tau-1} = s^\tau + \frac{s^\tau-m^\tau}{\tau-1} = s^\tau + \frac{s^\tau - [1+\sum_i v_ix^\tau_i]}{\tau-1} - \frac{\Delta^\tau}{\tau-1}\geq s^{\tau} - \frac{\Delta^{\tau-1}}{\tau-1}$,}
where the last inequality holds because $1+\sum_i v_ix^\tau_i\leq s^\tau$ thanks to constraint Eq.~(\ref{eq:fluid-resolve}) in the optimization problem $\Psi$.
This proves the recursion inequality about $s^{\tau-1}$.
$\square$
\end{proof}

{Lemmas \ref{lem:delta-centered} and \ref{lem:recursion-gamma-s} show that} $\{s^\tau,\gamma^\tau\}_\tau$ are sub-martingales: $\mathbb E[s^{\tau-1}|s^\tau]\geq s^\tau$
and $\mathbb E[\gamma^{\tau-1}|\gamma^\tau] \geq \gamma^\tau$.
For convenience, we also define the following notations:
\begin{equation}
\varepsilon^\tau(j):= \frac{\sum_{i=1}^NA_{ij}\Delta^{\tau}(i)-\min\{\gamma^\tau_j,A_0B_0K\}\Delta^\tau}{t^{\tau-1}}, \;\; 
\varepsilon^{\to\tau}(j) := \sum_{\tau'=\tau}^{\tau_0}\varepsilon^{\tau'}(j), \;\;
\Delta^{\to\tau} := \sum_{\tau'=\tau}^{\tau_0}\Delta^{\tau'}.
\label{eq:defn-vareps}
\end{equation}
{Intuitively, $\Delta^\tau$ is the time-normalized deviation of expected epoch lengths at epoch $\tau$, $\varepsilon^\tau(j)$ is the time-normalized deviation of consumption of resource type $j$ at epoch $\tau$, and $\Delta^{\to\tau}$, $\varepsilon^{\to\tau}$ are cumulative deviations of corresponding quantities for all epochs prior to $\tau$.}
Note also that $\gamma^{\tau_0}=\gamma_0$ and $s^{\tau_0} = 1+\sum_i v_ix_i^F$. Lemma \ref{lem:recursion-gamma-s} then implies
that for every complete epoch $\tau$ and resource type $j\in[M]$,
\begin{equation}
\textstyle
\gamma^{\tau-1}_j \geq \gamma_{0,j} - \varepsilon^{\to\tau}(j), \;\;\;\;\;\; s^{\tau-1} \geq 1+\sum_i v_ix_i^F - \Delta^{\to\tau}.
\label{eq:tracking}
\end{equation}
For convenience we also define the martingales version of $\{s^\tau,\gamma^\tau\}_\tau$ as 
\begin{equation}
\tilde\gamma^{\tau-1}_j := \gamma_{0,j} - \varepsilon^{\to\tau}(j), \;\;\;\;\;\; \tilde s^{\tau-1} := 1+\sum_i v_ix_i^F - \Delta^{\to\tau}.
\label{eq:defn-tilde-s-gamma}
\end{equation}
It is clear from Eq.~(\ref{eq:tracking}) that $\tilde\gamma^\tau\leq\gamma^\tau$ and $\tilde s^\tau\leq s^\tau$ for all $\tau$ almost surely.

\subsection{Analysis of stopping time}

Let $\tau^\circ$ be the last complete epoch during which the inventory levels of all resources remain positive and the time horizon $T$ is not reached.
Because the dynamic assortment optimization procedure either terminates or has very poor performance after epoch $\tau^\circ$,
it is vital to upper bound the (expected value) of $\tau^\circ$ to ensure that there are enough complete epochs.
This is the objective of this section.

To upper bound $\mathbb E[\tau^\circ]$ we first define $\tau^\circ$ using the tracked quantities $\{\gamma^\tau,s^\tau\}$.
Note that if {$\gamma^\tau_j>0$} then the inventory level of resource type $j$ is not depleted at the beginning of epoch $\tau$.
Also, if $s^\tau\geq 0$ then the time horizon $T$ has not yet been reached.
Therefore, we have
$\tau^\circ = \min\{\tau: \forall\tau'\geq\tau, \gamma^{\tau'}\geq 0\wedge s^{\tau'}\geq 0\} + 1$.
To allow the usage of stability results (to be established in the next section) of $\Psi$ defined in Eq.~(\ref{eq:Psi}) earlier,
we actually study a stopping time $\tau^\sharp$ that upper bounds $\tau^\circ$ almost surely.
More specifically, define
\begin{equation}
\tau^\sharp := \min\big\{\tau: \forall\tau'\geq\tau, \|\tilde\gamma^{\tau'}-\gamma_0\|_{\infty}\leq\rho_0\wedge |\tilde s^{\tau'}-s^{\tau_0}|\leq \rho_0\big\} + 1,
\label{eq:defn-tausharp}
\end{equation}
{where $\wedge$ is the logical ``AND'' operator.}
It is easy to see that $\tau^\sharp$ is a stopping time and satisfies $\tau^\sharp\geq\tau^\circ$ almost surely, thanks to Assumption (A3)
and the fact that $\gamma^\tau\geq\tilde\gamma^\tau$, $s^\tau\geq\tilde s^\tau$ almost surely.
Note that $|\tilde s^{\tau'}-s^{\tau_0}|\leq\rho_0$ implies $s^{\tau'}\geq s^{\tau_0}-\rho_0 = 1+\sum_i v_ix_i^F-\rho_0\geq 1$ thanks to Assumption (A3).
The following Lemma gives an upper bound on $\mathbb E[\tau^\sharp]$.
\begin{lemma}
It holds that $\mathbb E[\tau^\sharp] \leq  \const.\times A_0^2B_0^4K^4M\ln T/\rho_0^2$.
\label{lem:tausharp-bound}
\end{lemma}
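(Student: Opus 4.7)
My plan is to bound $\mathbb{E}[\tau^\sharp]$ by a tail-sum argument, controlling the deviations of the two martingale families $\{\tilde\gamma^\tau\}_\tau$ and $\{\tilde s^\tau\}_\tau$ via martingale concentration. Let $\tau^\star := \max\{\tau \in \{1,\dots,\tau_0\} : \|\tilde\gamma^\tau - \gamma_0\|_\infty > \rho_0 \text{ or } |\tilde s^\tau - s^{\tau_0}| > \rho_0\}$, with $\tau^\star := 0$ by convention when the set is empty. From Eq.~(\ref{eq:defn-tausharp}) one has $\tau^\sharp \leq \tau^\star + 2$, and the standard identity $\mathbb{E}[\tau^\star] \leq \sum_{\tau=1}^{\tau_0} \tau \cdot \Pr[V_\tau]$, where $V_\tau$ is the violation event at epoch $\tau$, reduces the problem to proving a uniform-in-$\tau$ tail bound of the form $\Pr[V_\tau] \lesssim \exp(-c\,\rho_0^2 \tau/C)$ for appropriate constants.

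The crux is estimating the variance of each martingale increment. Recall $\varepsilon^\tau(j) = [\sum_i A_{ij}\Delta^\tau(i) - \min\{\gamma^\tau_j,A_0B_0K\}\Delta^\tau]/t^{\tau-1}$ and $\Delta^\tau(i) = n^\tau(i) - v_i x^\tau_i$, where $n^\tau(i)$ is geometric with mean $v_i z^\tau_i$ (the building block of Lemma \ref{lem:delta-centered}). Conditioning on the information at the start of epoch $\tau$ fixes $\gamma^\tau_j$, $x^\tau$, $t^\tau$, so that the randomness sits in the $n^\tau(i)$; their sub-exponential tails with parameter $O(B_0K)$, combined with boundedness of $A_{ij}\le A_0$ and $\|x^\tau\|_1\le K$, yield a per-epoch conditional variance of order $A_0^2 B_0^4 K^4/(t^{\tau-1})^2$. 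Because each complete epoch has expected length $1+\sum_i v_i x^\tau_i \le 1+B_0K$, the remaining-time variable $t^{\tau-1}$ scales linearly with $\tau-1$, so accumulating from $\tau$ up to $\tau_0$ one obtains $\mathrm{Var}(\varepsilon^{\to\tau}(j)) = O(A_0^2 B_0^4 K^4/\tau)$, and similarly for $\mathrm{Var}(\Delta^{\to\tau})$.

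Equipped with this variance bound I would invoke a Freedman/Bernstein-type martingale concentration inequality (rather than Azuma--Hoeffding, since $n^\tau(i)$ is unbounded) to deduce
\begin{equation*}
\Pr\bigl[|\varepsilon^{\to\tau}(j)|>\rho_0\bigr] \;\le\; 2\exp\!\left(-\frac{c\,\rho_0^2\,\tau}{A_0^2 B_0^4 K^4}\right)
\end{equation*}
for a universal $c>0$, and likewise for $|\Delta^{\to\tau}|>\rho_0$. A union bound over $j\in[M]$ and the two martingale families gives $\Pr[V_\tau]$ with an overall factor $(M+1)$; plugging this into the tail sum and using the elementary estimate $\sum_{\tau\ge 1}\tau e^{-a\tau}=O(1/a^2)$ with $a=c\rho_0^2/(A_0^2 B_0^4 K^4)$ yields the claimed bound of order $MA_0^4 B_0^8 K^8/\rho_0^4$ once all constants are collected.

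The main obstacle is that the martingale differences $\varepsilon^\tau(j)$ are not uniformly bounded: the geometric $n^\tau(i)$ has unbounded support and the denominator $t^{\tau-1}=t^\tau-m^\tau$ itself depends on the random epoch length $m^\tau$. I would handle the first issue by exploiting sub-exponential tails of the geometric distribution inside a Freedman inequality (equivalently, one can truncate $n^\tau(i)$ at $\Theta((B_0K+\rho_0)\log T)$ and fold the negligible bias into $\mathrm{const}$). The second issue is resolved by restricting to the event $\{\tau\le\tau^\sharp\}$, on which $s^\tau\ge s^{\tau_0}-\rho_0\ge 1$ forces $t^{\tau-1}$ to be deterministically lower-bounded by a constant multiple of $\tau-1$, so that the per-epoch variance estimate, and hence the entire calculation, remains valid under the stopping rule defining $\tau^\sharp$.
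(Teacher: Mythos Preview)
Your approach is sound in outline and would yield the claimed bound, but it takes a genuinely different route from the paper. The paper avoids sub-exponential concentration entirely: it computes fourth moments $\mathbb{E}[|\Delta^{\to\tau}|^4]$ and $\mathbb{E}[|\varepsilon^{\to\tau}(j)|^4]$ directly, using the closed-form moments of geometric random variables to obtain $\mathbb{E}[|\Delta^{\to\tau}|^4]=O(K^4B_0^4/\tau^2)$ and $\mathbb{E}[|\varepsilon^{\to\tau}(j)|^4]=O(A_0^4B_0^8K^8/\tau^2)$. It then applies Doob's maximal inequality to the submartingale $\{|\varepsilon^{\to\tau'}(j)|^4\}$ (and likewise for $\Delta^{\to\tau'}$), obtaining the polynomial tail $\Pr[\sup_{\tau'\ge\tau}|\cdot|>\rho_0]\le \mathbb{E}[|\cdot|^4]/\rho_0^4=O(1/(\rho_0^4\tau^2))$, and finishes with the layer-cake $\mathbb{E}[\tau^\sharp]=2+\sum_\tau\Pr[\tau^\sharp\ge\tau]$ together with the convergent series $\sum_\tau\tau^{-2}$.

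The trade-offs: the paper's moment argument is more elementary---it needs only the explicit fourth moment of a geometric variable and Doob/Markov---and the factor $\rho_0^{-4}$ falls out automatically from the choice of a \emph{fourth} (rather than second) moment. Your Freedman route yields stronger exponential tails but must contend with unbounded increments; if you handle this by truncation at $\Theta(\log T)$, the Bernstein denominator picks up a $\log T$ term and you will not recover a $T$-free constant. You therefore genuinely need a sub-exponential/Orlicz-norm version of Freedman to match the stated bound, whereas the moment approach is insensitive to tail behaviour beyond integrability. Both arguments share the subtlety you flag regarding the random denominator $t^{\tau-1}$ in $\varepsilon^\tau(j)$; the paper's computation effectively replaces $t^{\tau-1}$ by $\tau-1$ without further comment, and your proposed resolution via the stopping rule is in the same spirit.
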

\begin{proof}{Proof of Lemma \ref{lem:tausharp-bound}.}
The first part of the proof is to upper bound $|\tilde s^{\tau}-s^{\tau_0}|=|\Delta^{\to\tau}|$.
Recall the definition that $\Delta^{\to\tau}=\sum_{\tau'=\tau}^{\tau_0}\Delta^{\tau'}/(\tau'-1)$, where $\Delta^{\tau'}=\sum_{i=1}^N n^{\tau'}(i)-v_ix_i^{\tau'}$.
Now fix $\tau=\tau'\geq\tau^\sharp$ and decompose $\Delta^\tau$ as $\Delta^\tau = [\sum_{i=1}^Nv_ix_i^F - \sum_{i\in S^{\tau}}v_i] + [\sum_{i\in S^{\tau}} (n^{\tau}(i) - v_i)]$. For the first term, note that both quantities are upper bounded by $BK$ almost surely. Hence,
\begin{equation}
\textstyle\mathbb E\left[\big|\sum_{i=1}^Nv_ix_i^F - \sum_{i\in S^{\tau}}v_i\big|^{2}\right]
\leq B_0^2K^2.
\label{eq:proof-tausharp-1}
\end{equation}
For the second term, note that for each $i\in S^\tau$ {($S^\tau$ is the assortment offered for all time periods in epoch $\tau$),} we have $\Pr[n^\tau(i)=n] = \frac{1}{1+v_i}\left(\frac{v_i}{1+v_i}\right)^n = (1-p_i)p_i^n$ \citep[Corollary A.1]{agrawal2019mnl}, where we define $p_i:=v_i/(1+v_i)$, implying $\mathbb E[n^\tau(i)]=v_i$. Hence, 
$\mathbb E[\big|n^\tau(i)-v_i\big|^{2}]
\leq \mathbb E[(n^\tau(i))^{2}] = (1-p_i)\sum_{n=0}^{\infty} n^{2}p_i^n
= \frac{(1+p_i)p_i}{(1-p_i)^2}=v_i(2+v_i)\leq 2B_0^2.$
Noting that $(a_1+\cdots+a_K)^2 \leq K^2(a_1^2+\cdots+a_K^2)$, we have that
\begin{equation}
\textstyle \mathbb E\left[\big|\sum_{i\in S^\tau} (n^\tau(i)-v_i)\big|^2\right] \leq 2K^2B_0^2.
\label{eq:proof-tausharp-2}
\end{equation}
Combining Eqs.~(\ref{eq:proof-tausharp-1},\ref{eq:proof-tausharp-2}) we have $\mathbb E[|\Delta^\tau|^2]\leq 2^2 \times (1 + 2) K^2 B_0^2 = 12K^2B_0^2$.
Note also that $\{\Delta^\tau\}$ is a martingale difference sequence and therefore $\mathbb E[\Delta^\tau|\Delta^1,\cdots,\Delta^{\tau-1}]=0$.
Subsequently,
\begin{align}
\mathbb E\left[\big|\Delta^{\to\tau}\big|^2\right]
&= \sum_{\ell=\tau}^{\tau_0}\frac{\mathbb E[(\Delta^\ell)^2]}{(\ell-1)^2}\leq 12K^2B_0^2\times \int_{\tau-2}^{\infty} \frac{\ud u}{u^2} = \frac{12K^2 B_0^2}{\tau-2}.
\label{eq:proof-tausharp-3}
\end{align}

Next, note that $\{\Delta^{\to\tau'}\}_{\tau'}$ is a martingale difference sequence. Therefore, $\{(\Delta^{\to\tau'})^4\}_{\tau'}$
is a sub-martingale. By Doob's martingale inequality we have 
\begin{align}
\Pr\left[\sup_{\tau\leq\tau'\leq\tau_0}\big|\Delta^{\to\tau'}\big|>\rho_0\right] 
&\leq \frac{\mathbb E[|\Delta^{\to\tau}|^2]}{\rho_0^2} \leq \frac{12K^2 B_0^2}{\rho_0^2(\tau-2)}.
\label{eq:proof-tausharp-3half}
\end{align}

We next focus on $\tilde\gamma^\tau_j$ for some resource type $j\in[M]$. 
Decompose the first term of $\varepsilon^\tau(j)=\sum_{i=1}^NA_{ij}\Delta^\tau(i)-\min\{\gamma^\tau_j,A_0B_0K\}\Delta^{\tau}$ as
$\sum_{i=1}^N A_{ij}\Delta^\tau(i) = \big[\sum_{i=1}^N A_{ij}v_ix^\tau_i - \sum_{i\in S^\tau}A_{ij}v_i\big] + 
[\sum_{i\in S^\tau} A_{ij}(n^\tau(i)-v_i)]$.
With $A_{ij}\leq A_0$ for all $i\in[N]$, $\sum_{i=1}^N v_ix^\tau_i\leq B_0\|x^\tau\|_1\leq B_0K$ and Eq.~(\ref{eq:proof-tausharp-3}), we have that
$\mathbb E[|\varepsilon^\tau(j)|^2] \leq 9(A_0^2B_0^2K^2 + 12A_0^2B_0^2K^2 + A_0^2B_0^2K^2\times 12B_0^2K^2)
\leq 2700 A_0^2B_0^4K^4.$
Subsequently, using the same analysis as in Eq.~(\ref{eq:proof-tausharp-3}), we have
\begin{equation}
\mathbb E[|\varepsilon^{\to\tau}(j)|^2] =\sum_{\ell,\ell'=\tau}^{\tau_0}\frac{\mathbb E[(\varepsilon^\ell(j))^2(\varepsilon^{\ell'}(j))^2]}{(\ell-1)^2(\ell'-1)^2}
\leq \frac{2700 A_0^2B_0^4K^4}{\tau-2},
\label{eq:proof-tausharp-4}
\end{equation}
where $\const.$ is a numerical constant omitted here for presentation purposes.
Note that $\{\varepsilon^{\tau'}(j)\}_{\tau'}$ is a martingale difference sequence. This implies $\{|\varepsilon^{\tau'}(j)|^4\}_{\tau'}$ is a sub-martingale difference sequence. By Doob's martingale inequality we have
\begin{align}
\Pr\left[\sup_{\tau\leq\tau'\leq\tau_0}\big|\varepsilon^{\to\tau'}(j)\big|>\rho_0\right]\leq \frac{\mathbb E[|\varepsilon^{\to\tau}(j)|^2]}{\rho_0^2}\leq \frac{2700 A_0^2B_0^4K^4}{\rho_0^2(\tau-2)}.
\label{eq:proof-tausharp-5}
\end{align}

Combining Eqs.~(\ref{eq:proof-tausharp-3half},\ref{eq:proof-tausharp-5}) and using the union bound, we have
\begin{align*}
\mathbb E[\tau^\sharp] &=2 + \sum_{\tau=3}^{\tau_0}\Pr[\tau^\sharp\geq\tau]  \leq \sum_{\tau=2}^{\tau_0}\Pr\left[\forall\tau'>\tau, |\Delta^{\to\tau'}|\leq\rho\wedge |\varepsilon^{\to\tau}(j)|\leq\rho,\forall j\in[M]\right]\\
&\leq 2 + \sum_{\tau=3}^{\tau_0}(M+1)\times \const.\times\frac{A_0^2B_0^4K^4}{\rho_0^2(\tau-2)} \leq \const.\times A_0^2B_0^4K^4M\ln T/\rho_0^2,
\end{align*}
proves the desired result. 
{Here, $\const.$ is a placeholder for a numerical, absolute constant independent of any problem parameters or constants involved in technical assumptions.}
$\square$
\end{proof}

\subsection{Stability analysis of $\Psi$}

The third building block of our proof to Theorem \ref{thm:constant-regret} is to analyze the maximum objective value of $\Psi(\gamma^\tau,s^\tau)$ for
constraint parameters $\gamma^\tau,s^\tau$ that deviates from $\gamma_0$ and $s^{\tau_0}=1+\sum_i v_ix_i^F$.
The following lemma analyzes such deviation in terms of $\varepsilon^{\to\tau}=\{\varepsilon^{\to\tau}(j)\}_{j=1}^M\in\mathbb R^M$ and $\Delta^{\to\tau}$
defined in Eq.~(\ref{eq:defn-vareps}).
\begin{lemma}
There exists a fixed vector $\eta_\gamma\in\mathbb R_+^M$ and scalars $\eta_s,\eta_h\geq 0$ depending only on $\mathcal M=(A_{ij},v_i)_{i,j=1}^{N,M}$, $K$, $\gamma_0$ such that the following holds: 
$$
\Psi(\gamma^\tau,s^\tau) \geq \Psi(\tilde\gamma^\tau,\tilde s^\tau) \geq \Psi(\gamma_0,s^{\tau_0}) - \big\langle \eta_\gamma, \varepsilon^{\to\tau}\big\rangle - \eta_s \Delta^{\to\tau}
- \eta_h(\|\varepsilon^{\to\tau}\|_2^2+|\Delta^{\to\tau}|^2).
$$
Furthermore, the scalar $\eta_h$ satisfies $\eta_h\leq \frac{8 M^2 KN B_0^3}{\chi_0^2}$.
\label{lem:psi-stability}
\end{lemma}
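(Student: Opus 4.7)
The first inequality is a routine consequence of monotonicity. The constraints of the LP defining $\Psi(\gamma,s)$ are $A^\top\nu(x)\le\gamma$ and $1+\sum_i v_ix_i\le s$; both become looser as $\gamma$ and $s$ grow, so $(\gamma,s)\mapsto\Psi(\gamma,s)$ is non-decreasing componentwise. Combined with $\gamma^\tau\ge\tilde\gamma^\tau$ and $s^\tau\ge\tilde s^\tau$ from Eq.~(\ref{eq:tracking}), this yields $\Psi(\gamma^\tau,s^\tau)\ge\Psi(\tilde\gamma^\tau,\tilde s^\tau)$ with no effort.

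For the second inequality, the plan is to view $\Psi$ as the value function of a parametric LP in $(\gamma,s)$, so that standard LP duality tells us $\Psi$ is concave and piecewise affine in its arguments. I will invoke the envelope theorem of \cite{milgrom2002envelope} to identify $\eta_\gamma\in\mathbb{R}_+^M$ and $\eta_s\ge0$ with the optimal dual multipliers at the base point $(\gamma_0,s^{\tau_0})$ associated with the $M$ resource constraints and the denominator constraint, respectively. These are the natural supergradient components that will appear as the linear coefficients in the claimed bound.

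The analysis then splits into two regimes. In the \emph{stable} regime $\|\varepsilon^{\to\tau}\|_\infty\le\rho_0$, $|\Delta^{\to\tau}|\le\rho_0$, Assumption~(A3) ensures that the set of active/binding constraints at $\Psi(\tilde\gamma^\tau,\tilde s^\tau)$ coincides with that at $\Psi(\gamma_0,s^{\tau_0})$, and the active sub-matrix $\tilde A_B$ is invertible with $\sigma_{\min}(\tilde A_B)\ge\chi_0$. Parametric LP theory then says the optimal solution is uniquely pinned down as the affine map $x^*(\tilde\gamma,\tilde s)=x^F+\tilde A_B^{-1}(\gamma_0-\tilde\gamma,s^{\tau_0}-\tilde s,\mathbf{0})^\top$, so $\Psi$ is exactly affine on this neighborhood and the desired inequality holds with equality and no quadratic term needed. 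In the \emph{unstable} regime, $\|\varepsilon^{\to\tau}\|_2^2+|\Delta^{\to\tau}|^2\ge\rho_0^2$, and I will instead use the trivial feasible bound $\Psi(\tilde\gamma^\tau,\tilde s^\tau)\ge 0$ (take $x=0$); an AM-GM/Cauchy-Schwarz argument then shows that $\Psi(\gamma_0,s^{\tau_0})-\langle\eta_\gamma,\varepsilon^{\to\tau}\rangle-\eta_s\Delta^{\to\tau}-\eta_h(\|\varepsilon^{\to\tau}\|_2^2+|\Delta^{\to\tau}|^2)\le0$ whenever $\eta_h$ exceeds a threshold of order $(\Psi(\gamma_0,s^{\tau_0})+\|\eta_\gamma\|_2^2+\eta_s^2)/\rho_0^2$, which is satisfied by the stated quantitative bound.

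The quantitative bound $\eta_h\le 8M^2KNB_0^3/\chi_0^2$ will come from estimating the sizes of $\eta_\gamma,\eta_s$ and $\Psi(\gamma_0,s^{\tau_0})$ in terms of the primitives. Writing $\eta_\gamma$ and $\eta_s$ through complementary slackness as linear functions of $r$ and $v$ driven by $\tilde A_B^{-1}$, the singular value condition $\sigma_{\min}(\tilde A_B)\ge\chi_0$ from (A3) gives $\|\eta_\gamma\|_2,\eta_s=O(\sqrt{N}B_0/\chi_0)$, while the trivial bound $\Psi(\gamma_0,s^{\tau_0})\le KB_0$ handles the constant term. I expect the \textbf{main obstacle} to be the unstable-regime case: the difficulty is to keep $\eta_h$ bounded by a quantity that depends only on the problem primitives and $\chi_0$, despite the fact that the perturbations $(\varepsilon^{\to\tau},\Delta^{\to\tau})$ can in principle be arbitrarily large---this requires carefully exploiting both the trivial nonnegativity of $\Psi$ and the minimum-perturbation-size $\rho_0$ supplied by the active-set stability region, and balancing them through an AM-GM step that allows the quadratic penalty to absorb the linear terms.
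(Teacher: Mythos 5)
Your first inequality and your identification of $\eta_\gamma,\eta_s$ with optimal dual multipliers via the envelope theorem both match the paper. But your treatment of the second inequality has a genuine gap in the \emph{stable} regime, which is where all the work actually lies. You assert that ``standard LP duality tells us $\Psi$ is concave and piecewise affine in its arguments'' and hence, once (A3) fixes the active basis, $\Psi$ is \emph{exactly affine} near $(\gamma_0,s^{\tau_0})$ and ``no quadratic term is needed.'' This is false for this particular parametric LP: the resource constraint is $\sum_i A_{ij}v_ix_i\le\gamma_j\bigl(1+\sum_k v_kx_k\bigr)$, i.e.\ $\sum_i(A_{ij}-\gamma_j)v_ix_i\le\gamma_j$, so $\gamma$ enters the \emph{constraint matrix}, not just the right-hand side. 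With a fixed basis $\tilde A_B(\gamma)$ the optimal solution is $x^*(\gamma,s)=\tilde A_B(\gamma)^{-1}\tilde b_B(\gamma,s)$, a rational (not affine) function of $\gamma$, and correspondingly $\partial_{\gamma_j}\Psi=\lambda_j^*\bigl(1+\sum_i v_ix_i^*\bigr)$ is non-constant even on the fixed-basis region. The value function therefore has genuine curvature on the stable neighborhood, and the whole point of the paper's proof is to bound this curvature: it shows $\Psi$ is twice differentiable there and establishes $\|\partial^2\Psi\|_{\mathrm{op}}\le 8M^2KNB_0^3/\chi_0^2$ by differentiating the basis equations $\tilde A_B^\top\tilde\lambda_B=v\odot r$ and $\tilde A_Bx^*=\tilde b_B$ with respect to $(\gamma,s)$ and using $\sigma_{\min}(\tilde A_B)\ge\chi_0$ (Lemma \ref{lem:hessian}); the claimed inequality then follows from a second-order Taylor expansion. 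Your proposal skips exactly this step.

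A secondary problem is that you relocate the burden of the $\eta_h$ bound to an ``unstable'' regime and derive a threshold of order $\bigl(\Psi(\gamma_0,s^{\tau_0})+\|\eta_\gamma\|_2^2+\eta_s^2\bigr)/\rho_0^2$. That quantity carries a $1/\rho_0^2$ dependence which is absent from the stated bound $8M^2KNB_0^3/\chi_0^2$, and nothing in (A1)--(A3) lets you absorb $\rho_0$ into $\chi_0$; so your verification that the stated bound ``is satisfied'' does not go through. (In the paper the lemma is only ever invoked for epochs $\tau\ge\tau^\sharp$, where by construction $\|\tilde\gamma^\tau-\gamma_0\|_\infty\le\rho_0$ and $|\tilde s^\tau-s^{\tau_0}|\le\rho_0$, so the unstable regime you analyze is not the source of the constant.) To repair the argument, drop the piecewise-affine claim, and instead prove a uniform Hessian bound for $\Psi$ on the stable neighborhood along the lines of Lemma \ref{lem:hessian}.
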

\begin{proof}{Proof of Lemma \ref{lem:psi-stability}.}
The first inequality trivially holds because any solution feasible to $\Psi(\tilde\gamma^\tau,\tilde s^\tau)$ is also feasible to $\Psi(\gamma^\tau,s^\tau)$,
since $\gamma^\tau\geq \tilde\gamma^\tau$ and $s^\tau\geq\tilde s^\tau$ almost surely.

By Assumptions (A3), the set of binding/active constraints are the same for all $\{(\gamma,s): \|\gamma-\gamma_0\|_{\infty}\leq\rho_0\wedge |s-s^{\tau_0}|\leq\rho_0\}$.
It is a standard result that this condition implies the differentiability of $\Psi(\cdot,\cdot)$ on $\{(\gamma,s): \|\gamma-\gamma_0\|_{\infty}\leq\rho_0\wedge |s-s^{\tau_0}|\leq\rho_0\}$ (see, e.g., \cite{gal2012advances,tercca2020envelope}).
The derivative of $\Psi(\cdot,\cdot)$ can also be calculated by using the celebrated \emph{envelope theorem} (\cite{milgrom2002envelope}, see also 
\cite[Theorem 1.F.1]{takayama1985mathematical}, \citep[Theorem 1]{tercca2020envelope}).
More specifically, let $x^*$ be the optimal solution to $\Psi(\gamma,\eta)$, $\lambda_j^*$ be the Lagrangian multiplier of the constraint $\sum_{i=1}^N A_{ij}v_ix_i \leq \gamma^\tau_j(1+\sum_{i=1}^N v_ix_i)$
and $\lambda_s^*$ be the Lagrangian multiplier of the constraint $1+\sum_{i=1}^N v_ix_i\leq s^\tau$, {both Lagrangian multipliers corresponding to the optimal solution $x^*$.} Then
$\eta_{\gamma,j} = {\partial_{\gamma_j}}\Psi(\gamma,s) = \lambda_j^*(1+\sum_iv_ix_i) \geq 0$ and 
$\eta_{s} = {\partial_s}\Psi(\gamma,s) = \lambda_s^* \geq 0$.
{Furthermore, Lemma \ref{lem:hessian} in the appendix shows that $\Psi$ is twice differentiable with bounded Hessian $\partial^2\Psi$ on $\{(\gamma,s): \|\gamma-\gamma_0\|_{\infty}\leq\rho_0\wedge |s-s^{\tau_0}|\leq\rho_0\}$.}
Let $\eta_h:=\max_{\gamma,s}\|\partial^2\Psi(\gamma,s)\|_{\mathrm{op}}\leq \frac{8 M^2 KN B_0^3}{\chi_0^2}<+\infty$ where $\gamma,s$ are in the closed neighborhood of $\{(\gamma,s): \|\gamma-\gamma_0\|_{\infty}\leq\rho_0\wedge |s-s^{\tau_0}|\leq\rho_0\}$.
Subsequently, using Taylor expansion we have that
$
|\Psi(\tilde\gamma^\tau,\tilde s^\tau) - (\Psi(\gamma_0,s^{\tau_0}) + \langle \eta_\gamma,\tilde\gamma^\tau-\gamma_0\rangle + \eta_s(\tilde s^\tau-s^{\tau_0}))|
\leq \eta_h [\|\tilde\gamma^\tau-\gamma_0\|_2^2+(\tilde s^\tau-s^{\tau_0})^2].
$
Plugging in the definitions of $\tilde\gamma^\tau$ and $\tilde s^\tau$ we complete the proof of Lemma \ref{lem:psi-stability}. $\square$
\end{proof}

\subsection{Completing the proof}

Recall the definition that $\beta^\tau=\sum_{i=1}^N r_in^\tau(i)$ is the total revenue collected during the epoch $\tau$.
The expected total revenue of the proposed policy can be lower bounded as
$\mathbb E[\sum_{t=1}^T r_{i_t}] \geq \mathbb E[\sum_{\tau=\tau^\sharp}^{\tau_0} \beta^\tau]$.
Therefore, it suffices to lower bound $\mathbb E[\sum_{\tau=\tau^\sharp}^{\tau_0}\beta^\tau]$.
Next, fix an arbitrary epoch $\tau\geq\tau^\sharp$. We have that
\begin{align}
\mathbb E[\beta^\tau|\gamma^\tau,s^\tau] &= \sum_{i=1}^N r_i\mathbb E[n^\tau(i)|\gamma^\tau,s^\tau] = \sum_{i=1}^N r_iv_i\mathbb E[x_i^\tau|\gamma^\tau,s^\tau]
=  \Psi(\gamma^\tau,s^\tau).
\label{eq:proof-complete-2}
\end{align}
Invoke Lemma \ref{lem:psi-stability} and note that $\mathbb E[\varepsilon^{\to\tau}]=0$, $\mathbb E[\Delta^{\to\tau}] = 0$. We have that 
\begin{align}
\mathbb E[\Psi(\gamma^\tau,s^\tau)] \geq \Psi(\gamma_0,s^{\tau_0}) - \eta_h\mathbb E[\|\varepsilon^{\to\tau}\|_2^2+|\Delta^{\to\tau}|^2].
\label{eq:proof-complete-3}
\end{align}
By Eqs.~(\ref{eq:proof-tausharp-3},\ref{eq:proof-tausharp-4}) in the proof of Lemma \ref{lem:tausharp-bound}, we have for every $\tau$ and $j\in[M]$ that
\begin{align}
\mathbb E\left[|\varepsilon^{\to\tau}(j)|^2\right] \leq \frac{2700 A_0^2B_0^4K^4}{\tau-2}, \;\;\;\; \mathbb E\left[|\Delta^{\to\tau}|^2\right] \leq \frac{12K^2 B_0^2}{\tau-2}.
\label{eq:proof-complete-31}
\end{align}
Combining Eqs.~(\ref{eq:proof-complete-3},\ref{eq:proof-complete-31}), we obtain
$\mathbb E[\Psi(\gamma^\tau,s^\tau)] \geq  \Psi(\gamma_0,s^{\tau_0}) - \const.\times \eta_h A_0^2 B_0^4 K^4/(\tau-2)$.
Additionally, because $s^{\tau_0}=1+\sum_i v_ix_i^F$, by Lemma \ref{lem:phi-psi} we have that $\Psi(\gamma_0,s^{\tau_0}) = s^{\tau_0}\Phi(\gamma_0)$.
Incorporating the bounds on $\mathbb E[\Psi(\gamma^\tau,s^\tau)]$, $s^{\tau_0}$ into Eqs.~(\ref{eq:proof-complete-2},\ref{eq:proof-complete-3}) and using the law of total expectation, we obtain
\begin{equation}
\mathbb E[\beta^\tau] \geq s^{\tau_0}\Phi(\gamma_0) -\const.\times \eta_h A_0^2 B_0^4 K^4/(\tau-2) .
\label{eq:proof-complete-5}
\end{equation}

Because $\tau^\sharp$ is a stopping time, the Doob's optional stopping theorem implies that
\begin{align}
\mathbb E\left[\sum_{\tau=\tau^\sharp}^{\tau_0}\beta_\tau\right] &\geq s^{\tau_0}\Phi(\gamma_0)\times (\tau_0-\mathbb E[\tau^\sharp])-\const.\times\sum_{\tau=1}^T\frac{\eta_h A_0^2B_0^4K^4}{\tau}\nonumber\\
&\geq T\Phi(\gamma_0) - s^{\tau_0}\Phi(\gamma_0)\times \const.\times A_0^4B_0^8K^8M\ln T/\rho_0^4-\const.\times\sum_{\tau=1}^T\frac{\eta_h A_0^2B_0^4K^4}{\tau}\label{eq:proof-complete-6}\\
&\geq T\Phi(\gamma_0) - s^{\tau_0}\Phi(\gamma_0)\times \const.\times A_0^4B_0^8K^8M\ln T/\rho_0^4 - \const.\times \eta_h A_0^2B_0^4K^4\ln T.\label{eq:proof-complete-7}
\end{align}
where Eq.~(\ref{eq:proof-complete-6}) holds by invoking Lemma \ref{lem:tausharp-bound} and noting that $s^{\tau_0}\tau_0 = T$ by definition;
Eq.~(\ref{eq:proof-complete-7}) holds because $\sum_{\tau=1}^T \tau^{-1}=O(\ln T)$ thanks to the Harmonic series.
The regret of the proposed re-solving policy is then at most 
$\const.\times s^{\tau_0}\Phi(\gamma_0)\times A_0^2B_0^4K^4M\ln T/\rho_0^2  + \const.\times \eta_h A_0^2B_0^4K^4\ln T 
\leq \const.\times A_0^2B_0^4K^4M\ln T/\rho_0^2 + \const.\times \frac{8 M^2 KN B_0^3}{\chi_0^2}\times A_0^2B_0^4K^4\ln T$,
which proves Theorem \ref{thm:constant-regret}.

\section{Numerical results}\label{sec:numerical}

We verify the effectiveness of the proposed resolving heuristics using numerical experiments.
{We compare the proposed Algorithm \ref{alg:resolve} (denoted as \textsf{Resolving} throughout this section) with the fluid approximation upper bound $\Phi(\gamma)$,
as well as two other obvious baseline methods: \textsf{Sampling-per-period} and \textsf{Sampling-per-epoch},
which involve solving the fluid problem only once and then sampling from it in an independent manner.
Details of these two baseline methods are given in Appendix \ref{appsec:numerical}.
 Details of experimental settings we implemented are also placed in Appendix \ref{appsec:numerical}.
}

\begin{figure}[t]
\centering
\includegraphics[width=0.32\textwidth]{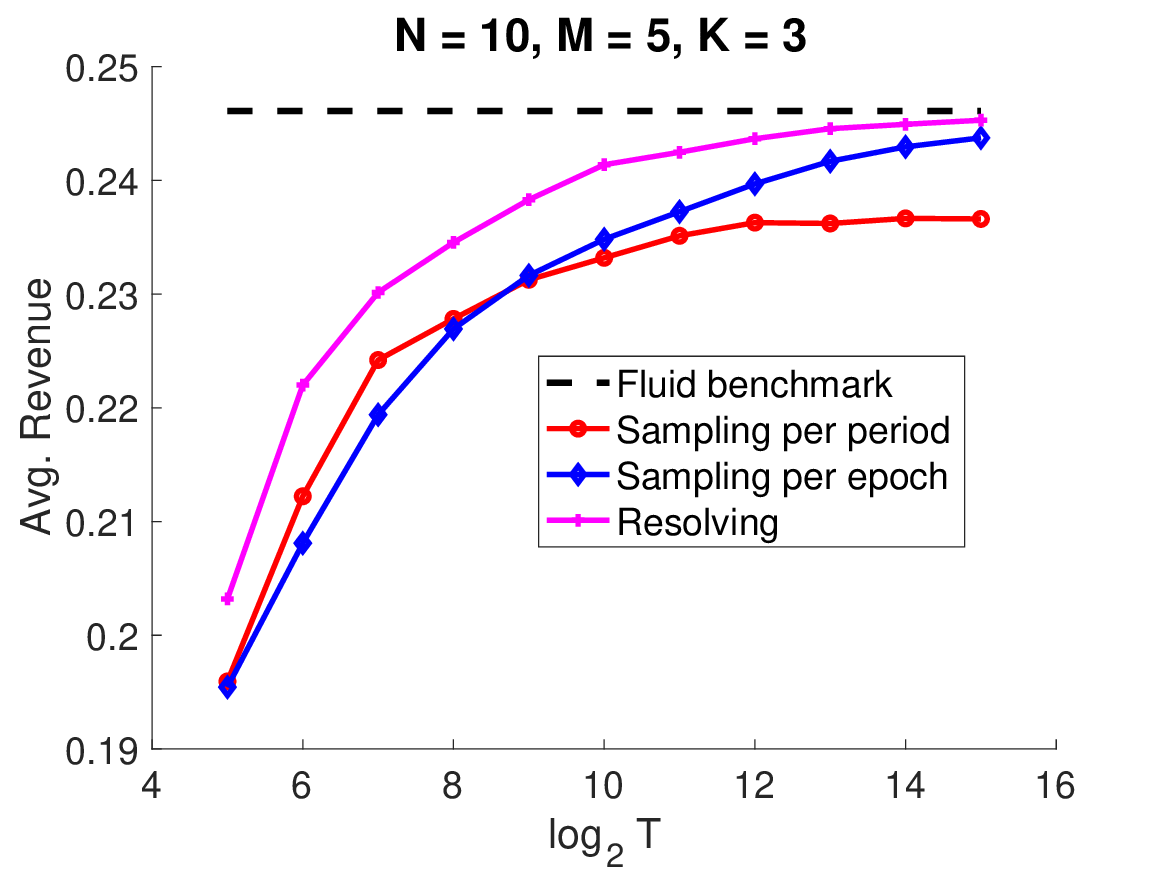}
\includegraphics[width=0.32\textwidth]{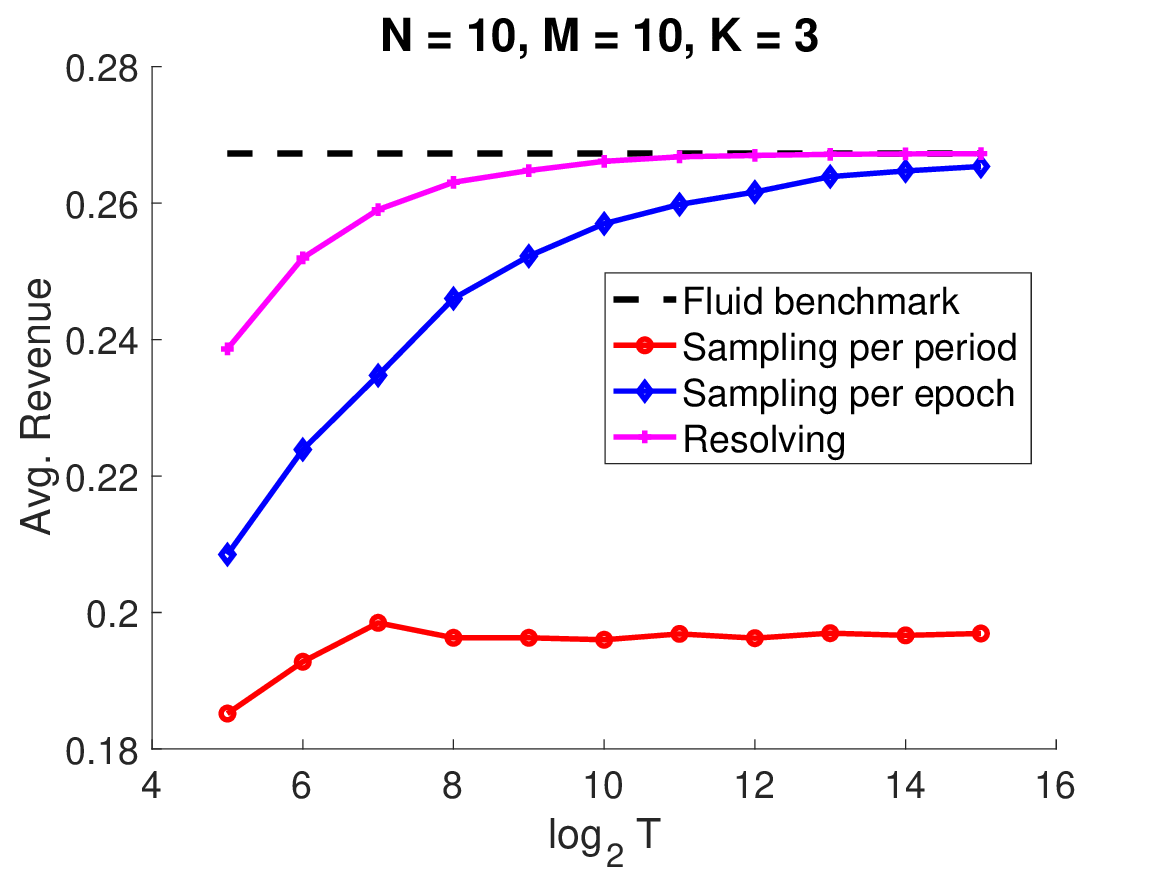}
\includegraphics[width=0.32\textwidth]{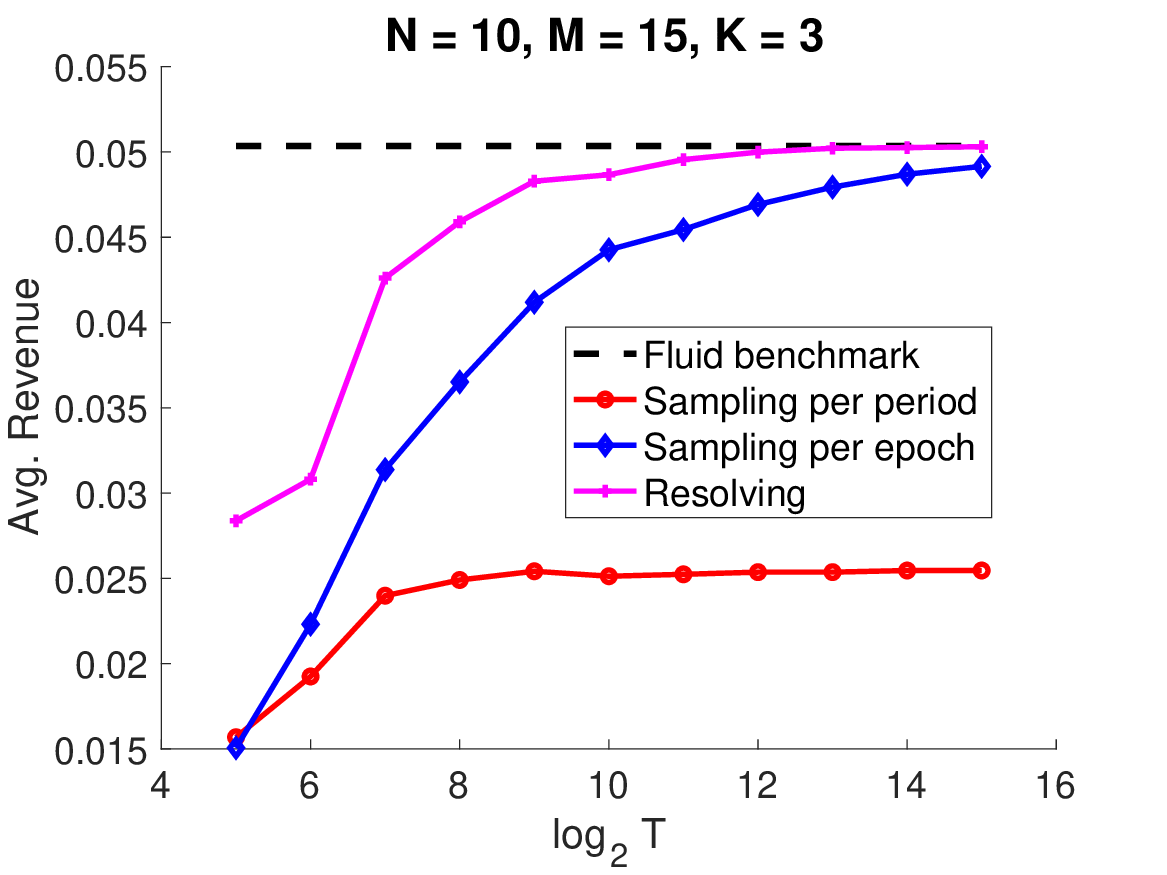}
\includegraphics[width=0.32\textwidth]{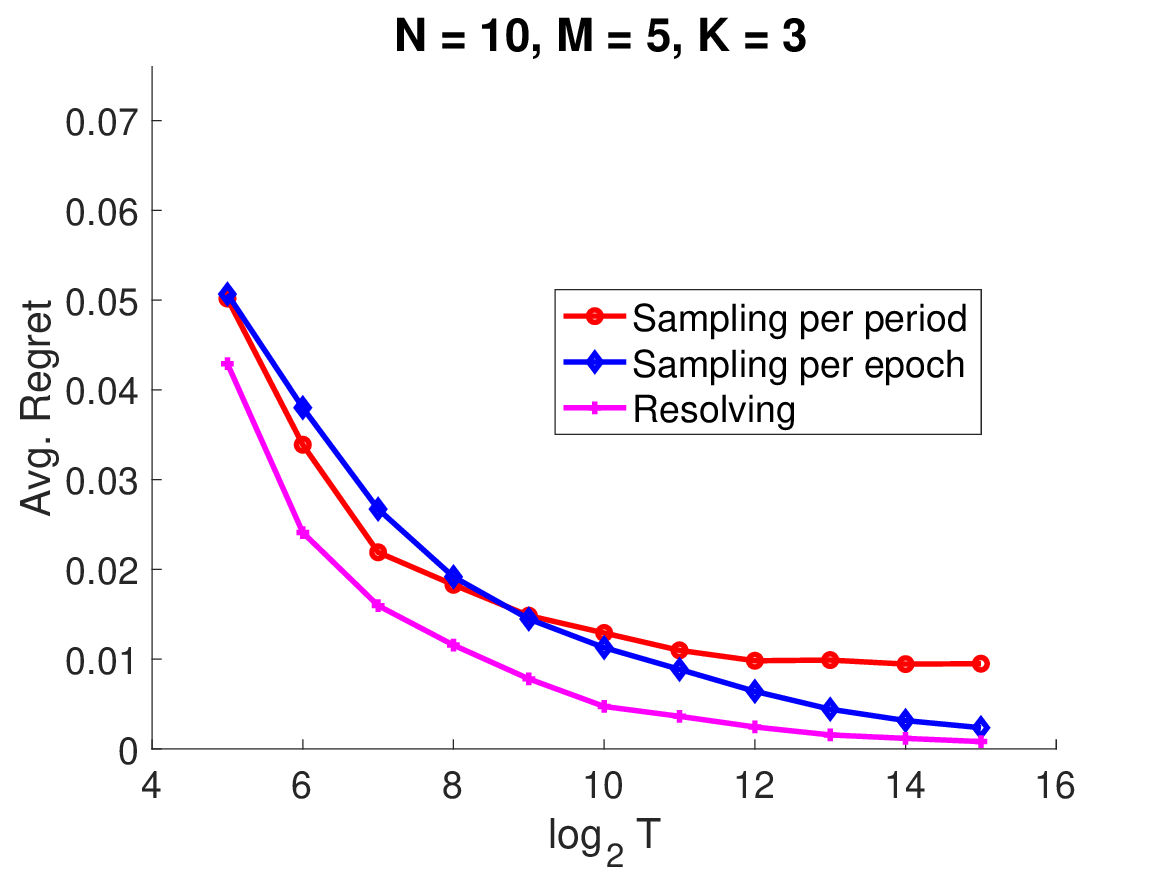}
\includegraphics[width=0.32\textwidth]{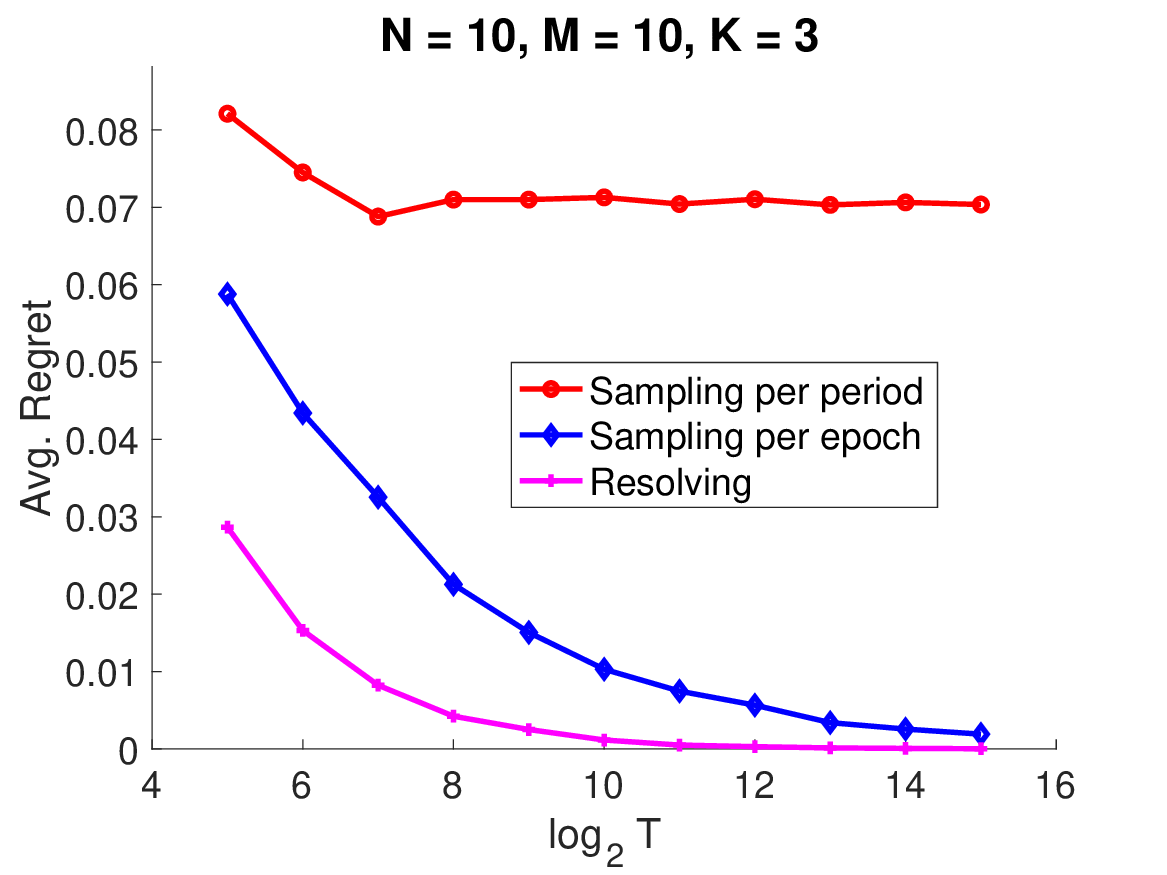}
\includegraphics[width=0.32\textwidth]{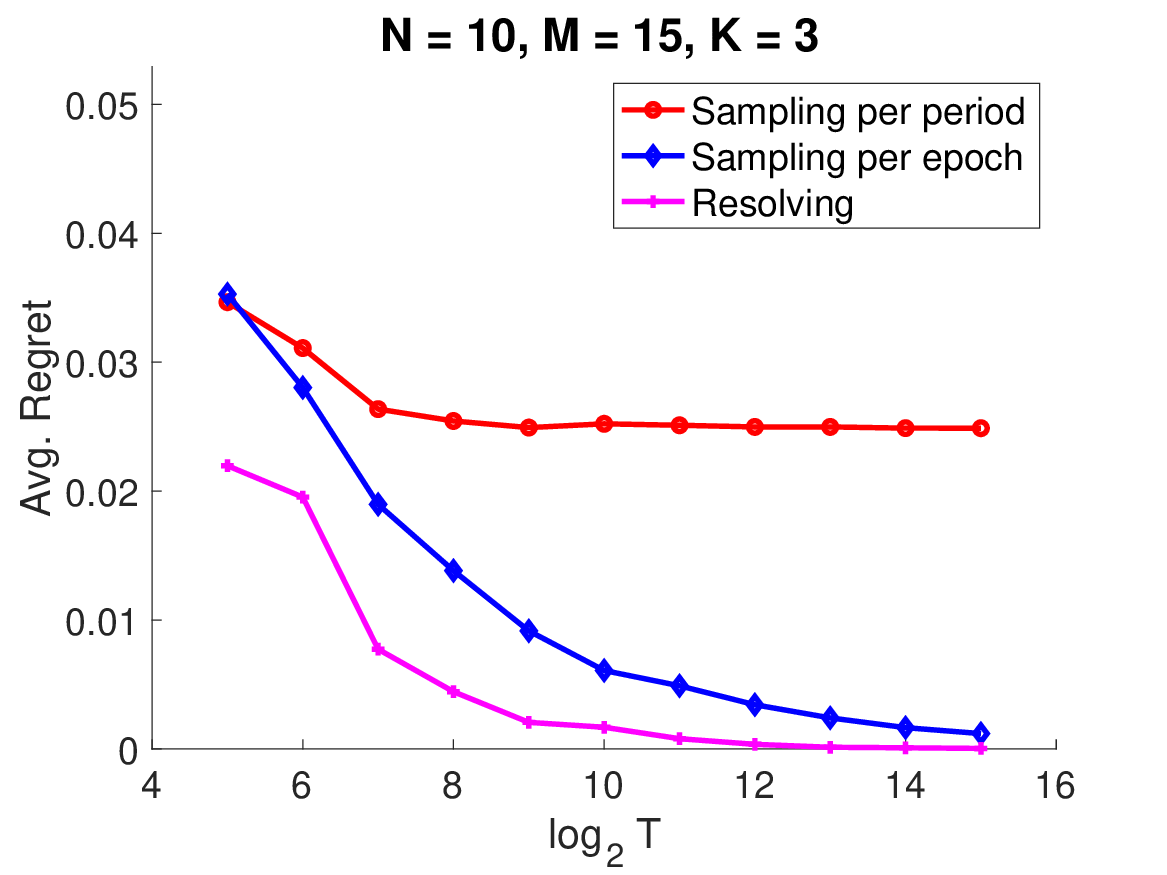}
\caption{Average revenue (top panel) and regret (bottom) performances for $N=10$ products and $K=3$ capacity constraint,
with the number of resources $M$ ranging from 5 to 15 and the number of time periods $T$ ranging from $2^5=32$ to $2^{15}=32768$.}
\label{fig:result_N10}
\end{figure}

In Figure \ref{fig:result_N10} we report the average revenue and regret performances of our proposed algorithm and the two mentioned baseline methods.
Each method is run for 500 independent trials and the mean revenue/regret is reported.
As we can see from Figure \ref{fig:result_N10}, the \textsf{Sampling-per-period} method has an inferior performance because of the non-linearity
of the $x\mapsto v^\top x/(1+v^\top x)$ function. In fact, the average revenue of the \textsf{Sampling-per-period} method 
does \emph{not} converge to the fluid approximation benchmark even when the time horizon $T$ is very large, indicating that it has linear regret.
On the other hand, the average revenue of both \textsf{Sampling-per-epoch} and \textsf{Resolving} converges to the fluid benchmark as $T$ increases,
indicating that both methods have sub-linear regret.
Furthermore, \textsc{Resolving} clearly outperforms \textsf{Sampling-per-epoch} to various degrees under virtually all different settings,
demonstrating the practical effectiveness of re-solving fluid optimization problems in the process of dynamic assortment optimization.

\begin{figure}[t]
\centering
\includegraphics[width=0.32\textwidth]{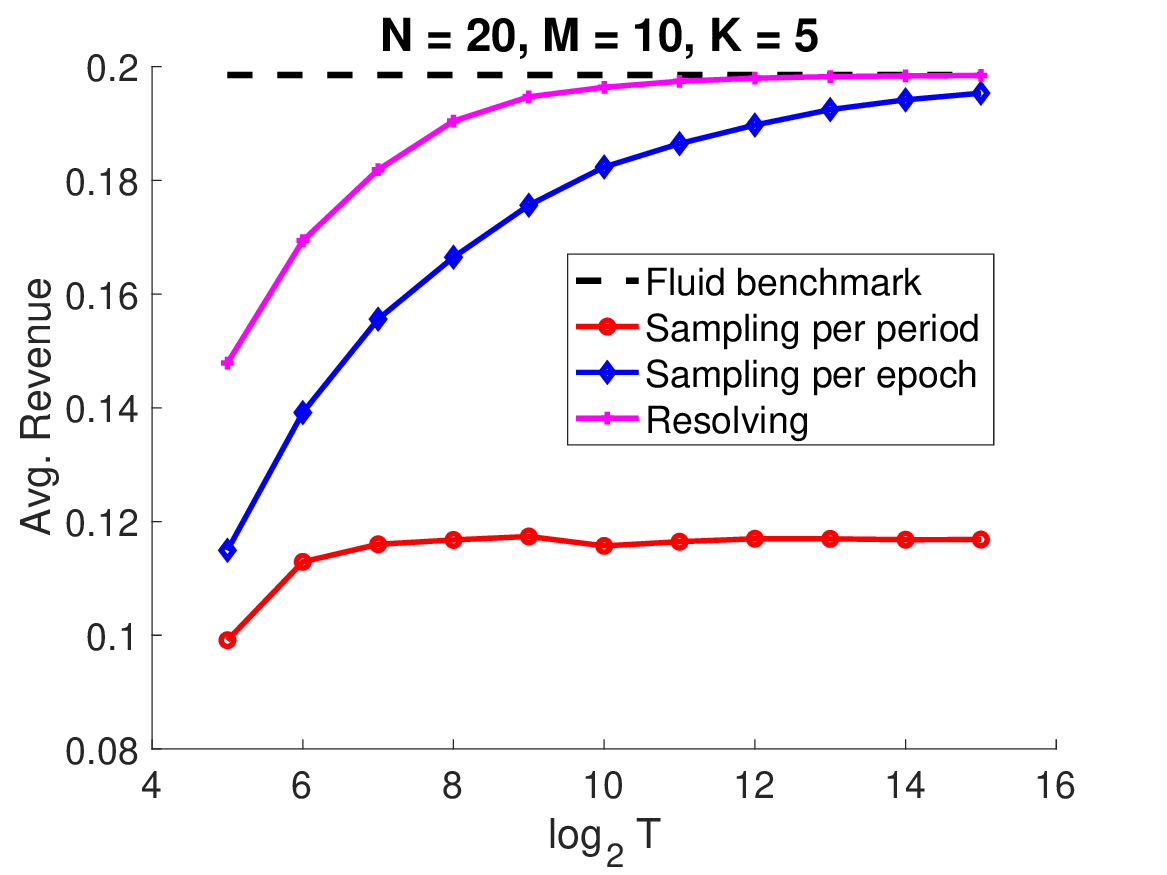}
\includegraphics[width=0.32\textwidth]{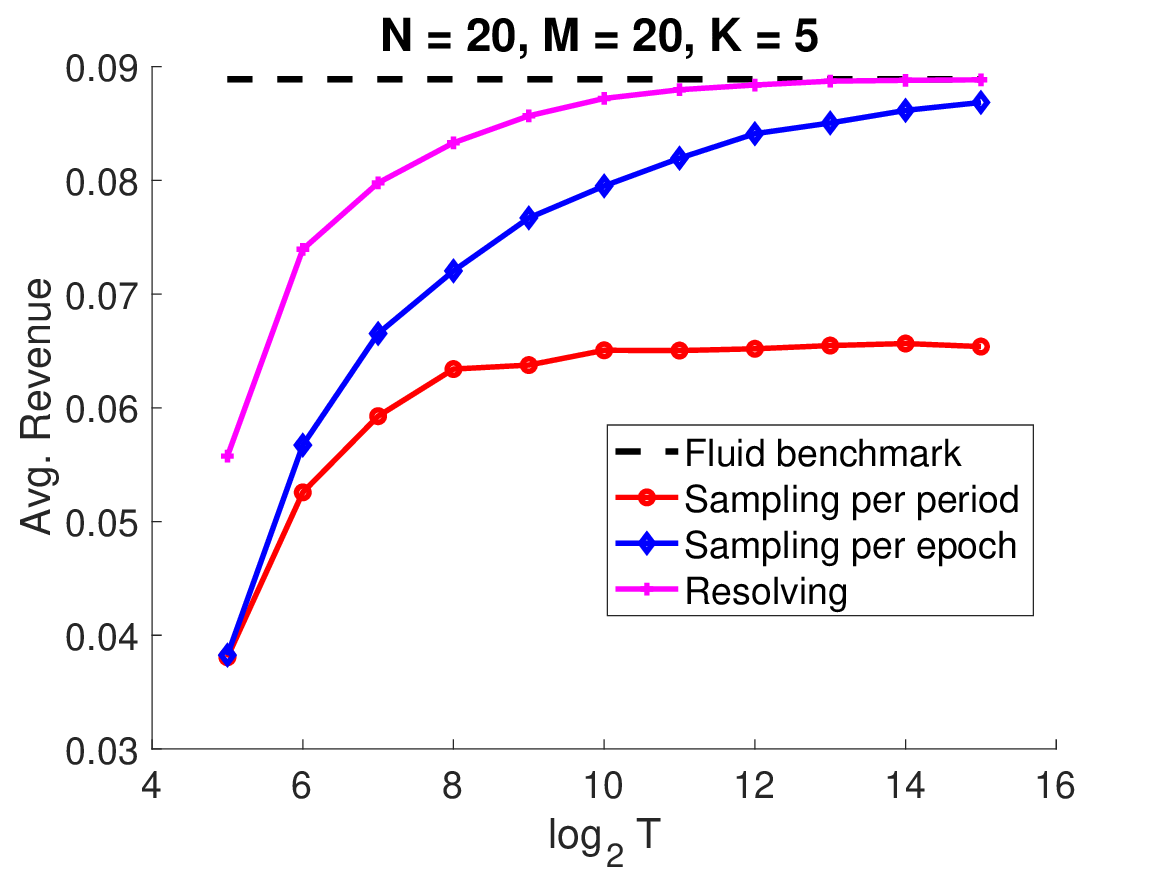}
\includegraphics[width=0.32\textwidth]{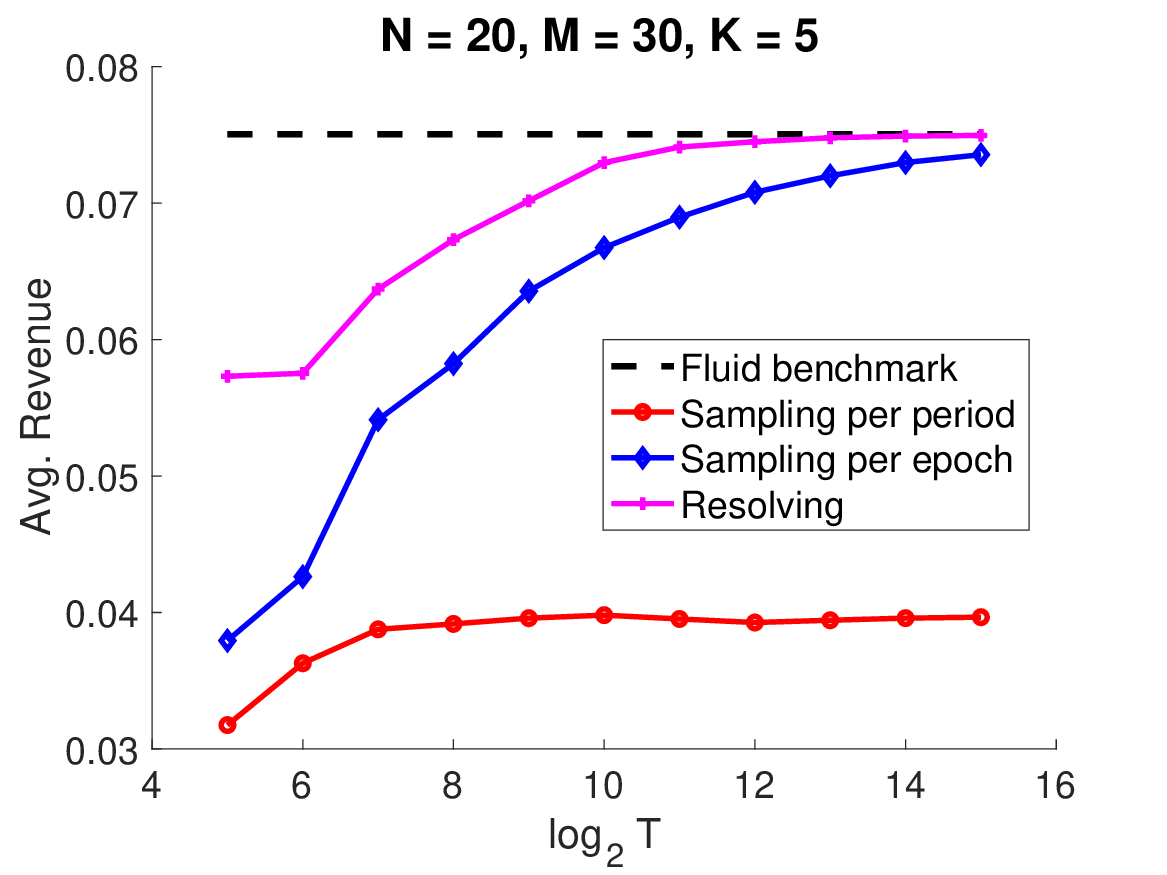}
\includegraphics[width=0.32\textwidth]{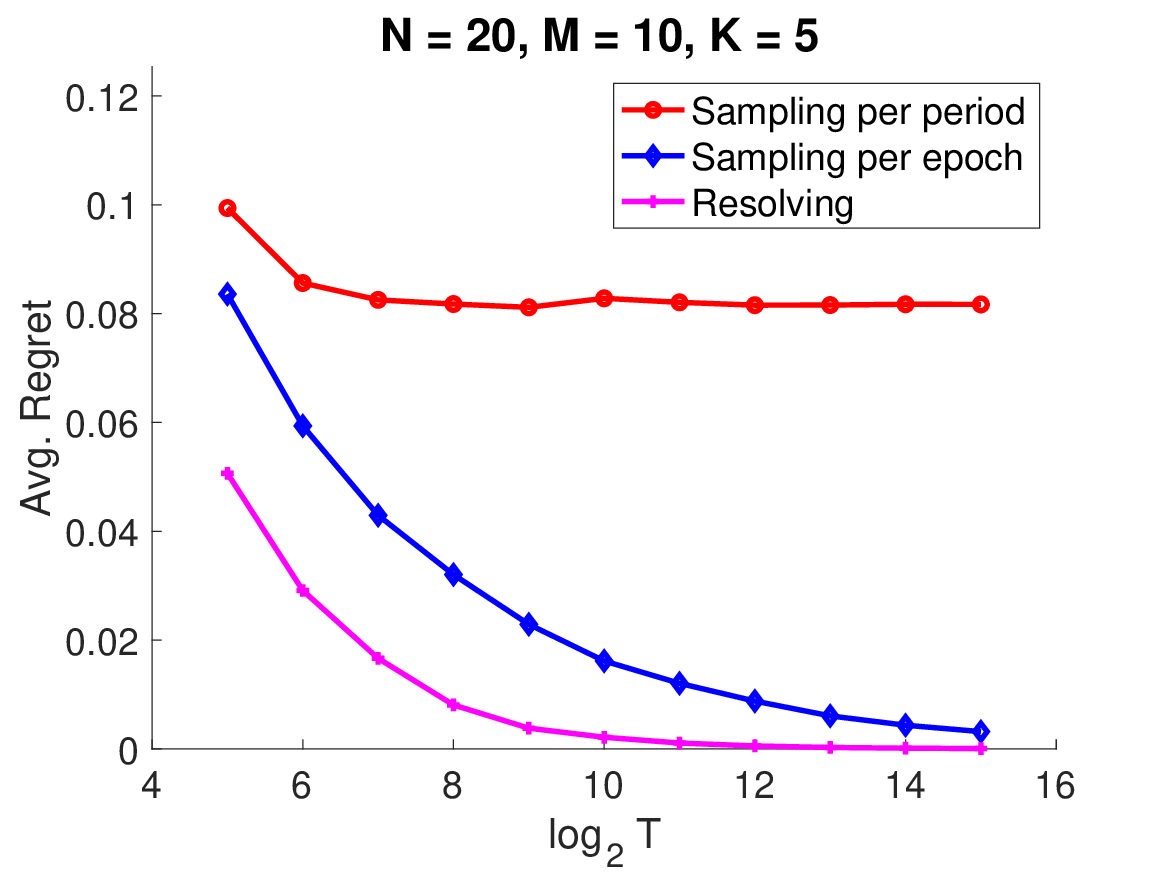}
\includegraphics[width=0.32\textwidth]{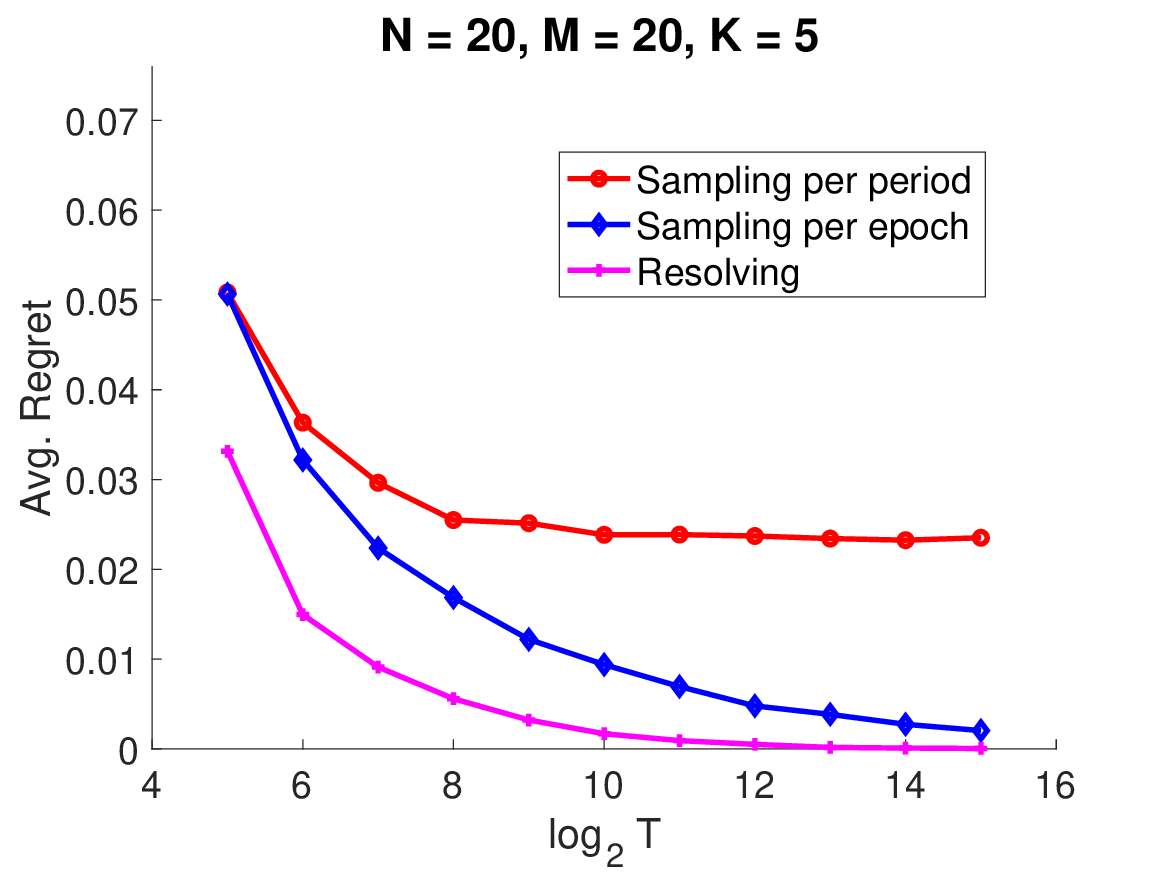}
\includegraphics[width=0.32\textwidth]{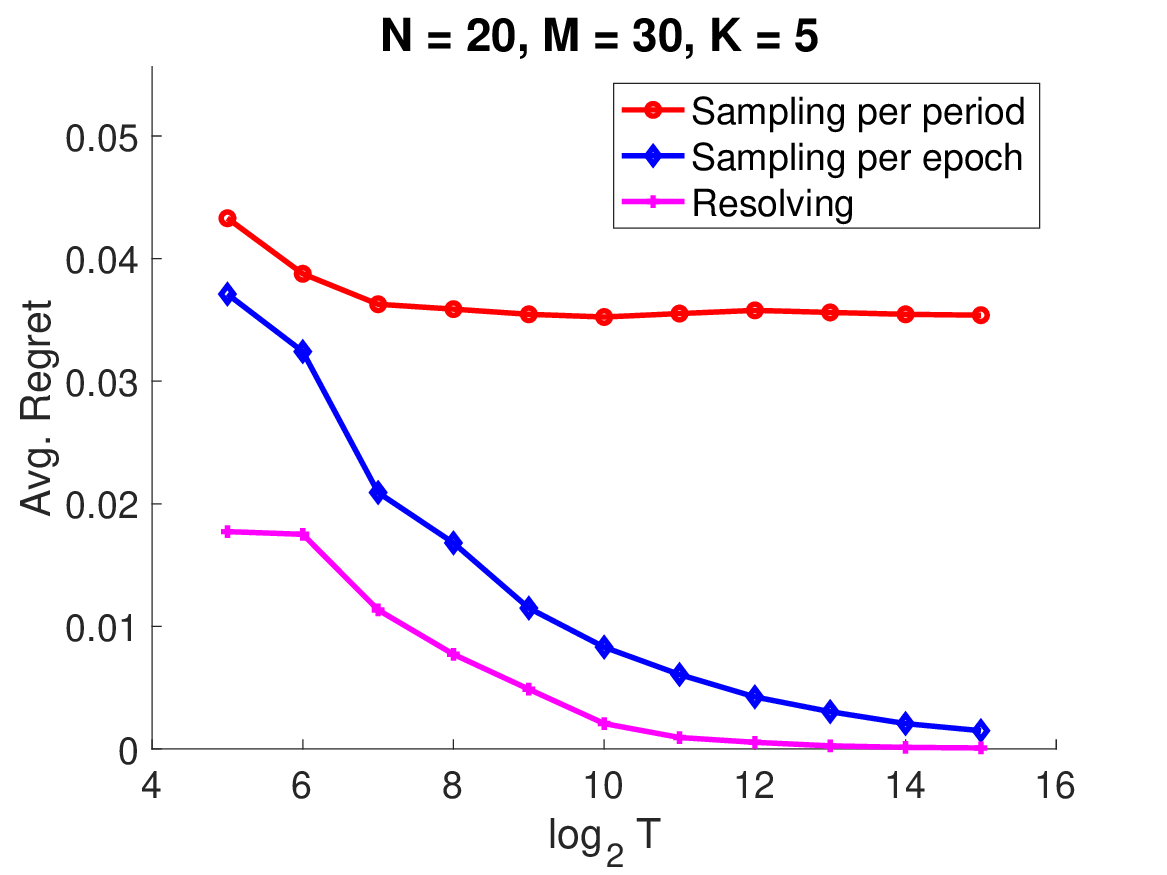}
\caption{Average revenue (top panel) and regret (bottom) performances for $N=20$ products and $K=5$ capacity constraint,
with the number of resources $M$ ranging from 10 to 30 and the number of time periods $T$ ranging from $2^5=32$ to $2^{15}=32768$.}
\label{fig:result_N20}
\end{figure}

In Figure \ref{fig:result_N20} we further report the average revenue and regret of the algorithms on slightly larger problem instances with $N=20$ products
and the number of resources $M$ ranging from 10 to 30. We have also increased the cardinality constraint $K$ from 3 to 5.
The results are similar to those reported in Figure \ref{fig:result_N10}, with even larger performance gaps between \textsf{Sampling-per-epoch} and \textsf{Resolving} on these larger problem instances.

Finally, we remark that both Figures \ref{fig:result_N10} and \ref{fig:result_N20} show that our proposed method has superior performance compared with baseline methods even for shorter time horizons where $T$ is small. This empirical observation complements our theoretical findings in Theorem \ref{thm:constant-regret}
which is focused primarily for longer time horizons.
{We also remark that both benchmarks are computationally cheaper than the resolving policy because they only need to compute the optimal solution to the fluid approximation once;
nevertheless, both benchmarks still require obtaining fresh samples of assortments every time period or epoch, invoking Algorithm \ref{alg:sampling-fluid} many times.
}

\section{Conclusions and Future Directions}
\label{sec:conclusion}

In this paper, we propose a new epoch-based re-solving technique for dynamic assortment optimization under knapsack inventory constraints. The proposed re-solving technique addresses several technical challenges arising from the non-linear objective due to the MNL choice model.
There are a few important future directions. For example, it would be interesting to extend the current problem into a learning problem where the choice model is unknown. The learning problem is closely related to multi-armed bandit (MAB) with knapsack constraints \citep{badanidiyuru2018bandits}. On the other hand, the special structure of MNL could make this problem much more complicated than the standard (MAB). In addition, it would be interesting to consider more complicated choice models beyond the standard MNL model, such as nested logit model \citep{Williams77,McFadden1980} or other choice models (see, e.g., \cite{Farias2013,Desir2015} and references therein).


\section*{Acknowledgement}
We thank the department editor, the senior editor and two anonymous referees for their helpful suggestions that greatly improved this paper.
Yuan Zhou was supported by the National Natural Science Foundation of China Grant 52494974.

\begin{appendices}
	
\vspace{.2cm}

{
\section{Proof of Lemma \ref{lem:fluid}}
\label{app:fluid}

We first define some notations.
Let $\mathcal S := \{S\subseteq[N]: |S|\leq K\}$ denote the set of all possible assortments of size at most $K$.
Let $\Delta^{\mathcal S} := \{q\in[0,1]^{|\mathcal S|}: \sum_{S\in\mathcal S}q_S=1\}$ denote the set of all probability distributions over $\mathcal S$.
For any $S\in\mathcal S$, define
$$
R(S) := \frac{\sum_{i\in S}r_iv_i}{1+\sum_{i\in S}v_i}, \;\;\;\;\;\;\nu_i(S) = \frac{\vct 1\{i\in S\}v_i}{1+\sum_{j\in S} v_j}
$$
as the expected revenue and the probability of choosing item $i$ when assortment $S$ is produced.
Let also $\nu(S) = (\nu_1(S),\cdots,\nu_N(S))\in\mathbb R^N$ be the concatenated expected purchase probability vector.

\paragraph{An alternative fluid approximation.} We present the following optimization problem, serving as an alternative fluid approximation:
\begin{align}
OPT_1 &:= \max_{q\in\Delta^{\mS}} \sum_{S\in\mS}q_S R(S) \;\;\;\;\;\;s.t.\;\; \sum_{S\in\mS}q_S A^\top\nu(S) \leq \gamma_0.
\label{eq:fluid-1}
\end{align}
Note that Eq.~(\ref{eq:fluid-1}) is an LP having $|\mS|$ variables and $N$ constraints. Because it has an enormously large number of variables, it is not suitable for practical optimization but the formulation serves as an important tool for mathematical analysis of other optimization problems.
An important result below shows that $OPT_1$ upper bounds the (normalized) expected revenue of any admissible policy.
\begin{lemma}
For any admissible policy $\pi$ over $T$ periods, it holds that $\mathbb E^\pi[\sum_{t=1}^T R(S_t)] \leq T\times OPT_1$.
\label{lem:fluid-1-valid}
\end{lemma}
\begin{proof}{Proof of Lemma \ref{lem:fluid-1-valid}.}
Let $S_1,\cdots,S_T$ be the assortments offered by policy $\pi$ over $T$ periods, which are correlated random variables forming a Markov chain.
For any $S\in\mS$, define $\hat q_S := \frac{1}{T}\sum_{t=1}^T\mathbb E^\pi[\vct 1\{S_t=S\}]$.
Clearly $\hat q_S\geq 0$ and $\sum_{S\in\mS}\hat q_S=1$ by definition.
Because $\pi$ is an admissible policy, $\sum_{t=1}^T A^\top\nu(S_t)\leq\gamma_0 T$ almost surely and therefore $\sum_{t=1}^T A^\top\mathbb E^\pi[\nu(S_t)]\leq\gamma_0 T$,
implying that $\hat q=(\hat q_S)_{S\in\mS}$ is feasible to Eq.~(\ref{eq:fluid-1}) because
\begin{align*}
\sum_{S\in\mathcal S}\hat q_S A^\top\nu(S) = \sum_{S\in\mathcal S}\frac{1}{T}\sum_{t=1}^T\mathbb E^\pi[\vct 1\{S_t=S\}] A^\top\nu(S) = \frac{1}{T}\sum_{t=1}^TA^\top\mathbb E^\pi[\nu(S_t)] \leq \gamma_0.
\end{align*}
Subsequently, $\mathbb E^\pi[\sum_{t=1}^T R(S_t)] = T\times \sum_{S\in\mS}\hat q_SR(S) \leq T\times OPT_1$, which is to be proved. $\square$
\end{proof}

{
\begin{remark}
Eq.~(\ref{eq:fluid-1}) is a general formulation for fluid approximation in sequential decision making with aggregated inventory constraints and unchanged action spaces, and Lemma \ref{lem:fluid-1-valid}
is somewhat folklore which can be found, perhaps in different forms, in many existing literature (e.g.~\cite{miao2021general}). 
On the other hand, the fact (which we are going to rigorously establish in the remainder of this section) that Eq.~(\ref{eq:fluid-1}) has the same optimal objective as Eq.~(\ref{eq:fluid}) is highly non-trivial and
is a consequence of strong duality of both formulations sharing the same dual function in this particular application of assortment optimization.
\end{remark}
}

It is also helpful to derive the Lagrangian dual function of Eq.~(\ref{eq:fluid-1}). Let $\lambda_j\geq 0$ be a Lagrangian multiplier associated with the constraint involving $\gamma_{0j}$ on the right-hand side of the constraints. Let also $\lambda = (\lambda_1,\cdots,\lambda_M)\in\mathbb R_+^M$ be the concatenated Lagrangian multiplier vector. The dual function of Eq.~(\ref{eq:fluid-1}) is then derived as
\begin{align}
g_1(\lambda) &:= \max_{q\in\Delta^{\mS}} \sum_{S\in\mS}R(S)q_S - \lambda^\top \left(\sum_{S\in\mS}q_SA^\top\nu(S)-\gamma_0\right)
= \max_{S\in\mS} R(S)-\lambda^\top A^\top \nu(S) + \lambda^\top\gamma_0\label{eq:proof-fluid-ap1}\\
&= \max_{S\in\mS} \frac{\sum_{i\in S}(r_i-\varphi_i)v_i}{1+\sum_{i\in S}v_i} + \lambda^\top\gamma_0,\label{eq:proof-fluid-ap2}
\end{align} 
where $\varphi := \lambda^\top A^\top\in\mathbb R^N$, and the second inequality in Eq.~(\ref{eq:proof-fluid-ap1}) holds because we are optimizing a linear function over a probability simplex.

\paragraph{Dual function of fluid approximation in Definition \ref{defn:fluid}.} Define $\beta=(\beta_1,\cdots,\beta_N)\in\mathbb R^N$ where $\beta_i=r_iv_i$, and $v=(v_1,\cdots,v_N)\in\mathbb R^N$. Define alse $B\in\mathbb R^{N\times M}$ as $B_{ij}=A_{ij}v_i$.
Let $\mathcal X := \{x\in[0,1]^N: \vct 1^\top x\leq K\}$. The fluid approximation problem in Definition \ref{defn:fluid} can then be written as
\begin{align}
OPT_2 := \max_{x\in\mathcal X} \frac{\beta^\top x}{1+v^\top x} \;\;\;\;\;\;s.t.\;\; \frac{B^\top x}{1+v^\top x}\leq\gamma_0.
\label{eq:fluid-2}
\end{align}

We next derive the dual problem to Eq.~(\ref{eq:fluid-2}). Let $\lambda\in\mathbb R_+^M$ be the Lagrangian vector. Then the dual function can be derived as
\begin{align}
g_2(\lambda) = \max_{x\in\mX}\frac{\beta^\top x}{1+v^\top x} - \lambda^\top\left(\frac{B^\top x}{1+v^\top x}-\gamma_0\right) = \max_{x\in\mX}\frac{(\beta- B\lambda)^\top x}{1+v^\top x} + \lambda^\top\gamma_0.
\label{eq:proof-fluid-ap3}
\end{align}

\paragraph{Fluid approximation via Charnes-Cooper transform.} Note that Eq.~(\ref{eq:fluid-2}) can be re-formulated as a linear fractional programming as
\begin{align}
OPT_2 = \max_{x\in\mX} \frac{\beta^\top x}{1+v^\top x} \;\;\;\;\;\;s.t.\;\; (B^\top-\gamma_0v^\top)x\leq\gamma_0,
\label{eq:fluid-2half}
\end{align}
because $1+v^\top x\geq 1$ for all $x\in\mX$ and therefore an ($1+v^\top x$) term can be multiplied on both sides of inequality constraints in Eq.~(\ref{eq:fluid-2}).
Using the celebrated Charnes-Cooper transform of linear fractional programming (see, e.g.~\citep{charnes1962programming}, and also Appendix \ref{sec:cp} this paper for a brief summary)
$z=x/(1+v^\top x)$ and $t=1/(1+v^\top x)$, Eq.~(\ref{eq:fluid-2half}) is equivalent to
\begin{align*}
OPT_2 = \max_{z,t}\beta^\top z\;\;\;\;\;\;s.t.\;\; (B^\top-\gamma_0v^\top) z\leq\gamma_0 t, \;\; v^\top z+t=1, \;\; t\geq 0, \;\; z/t\in\mX,
\end{align*}
which is further equivalent to
\begin{align}
OPT_2 = \max_{z,t\in\mZ} \beta^\top z\;\;\;\;\;\;s.t.\;\; (B^\top-\gamma_0v^\top)z\leq\gamma_0 t,
\label{eq:fluid-3}
\end{align}
where $\mZ = \{(z,t): v^\top z+t=1,z\geq 0,t\geq 0,z\leq t,1^\top z\leq Kt\}$.
Note that Eq.~(\ref{eq:fluid-3}) is an LP over $z,t$, and the domain $\mZ$ is a polytope in $\mathbb R^{N+1}$.

It is also instructive to derive the Lagrangian dual function of Eq.~(\ref{eq:fluid-3}).
For any Lagrangian multiplier vector $\lambda\in\mathbb R_+^M$, the dual function is derived as
\begin{align}
g_3(\lambda) := \max_{z,t\in\mZ}\beta^\top z-\lambda^\top ((B^\top-\gamma_0 v^\top)z-\gamma_0 t) = \max_{z,t\in\mZ}(\beta-B\lambda)^\top z + \lambda^\top\gamma_0,
\label{eq:proof-fluid-ap4}
\end{align}
where the second equality holds because $t=1-v^\top z$ for any $(z,t)\in\mZ$.

We next present the central lemma to our proof, showing that the three dual functions $g_1(\cdot),g_2(\cdot),g_3(\cdot)$ are equivalent.
\begin{lemma}
For any $\lambda\in\mathbb R_+^M$, $g_1(\lambda)=g_2(\lambda)=g_3(\lambda)$.
\label{lem:dual-equivalence}
\end{lemma}
\begin{proof}{Proof of Lemma \ref{lem:dual-equivalence}.}
Fix arbitrary $\lambda\in\mathbb R_+^M$. We first focus on dual functions $g_1$ and $g_2$.
Consider arbitrary $S^*\in\arg\max_{|S|\leq K}\frac{\sum_{i\in S}(r_i-\varphi_i)v_i}{1+\sum_{i\in S}v_i}$.
Define $x_i := \vct 1\{i\in S^*\}$. It is easy to verify that $x\in\mX$. Furthermore, $\frac{\sum_{i\in S}(r_i-\varphi_i)v_i}{1+\sum_{i\in S}v_i} = \frac{(\beta-B\lambda)^\top x}{1+v^\top x}$. This shows that $g_1(\lambda)\leq g_2(\lambda)$.

To show the other direction, note that there exists $x^*\in\{0,1\}^N\cap\mX$ such that $\frac{(\beta-B\lambda)^\top x^*}{1+v^\top x^*} = g(\lambda)-\gamma_0^\top\lambda$.
To see the existence of such $x^*$, consider arbitrary $x^\sharp \in\arg\max_{x\in\mX}\frac{(\beta-B\lambda)^\top x}{1+v^\top x} $.
By definition, $\frac{(\beta-B\lambda)^\top x^\sharp}{1+v^\top x^\sharp} = g^*=:g(\lambda)-\gamma_0^\top\lambda$, which implies 
\begin{align}
(\beta-B\lambda-g^*v)^\top x^\sharp \geq g^*.
\label{eq:proof-dual-1}
\end{align}
For every $i\in[N]$ let $\varsigma_i := (\beta-B\lambda -g^*v)_i$. Now define $x_i^*=1$ if $\varsigma_i\geq 0$ and $\varsigma_i$ is within the top-$K$ of $\{\varsigma_i\}_{i=1}^N$ (ties broken arbitrarily),
and $x_i^*=0$ otherwise. It is easy to see that such defined $x^*\in\mX$ achieves the maximum possible value on the left-hand side of Eq.~(\ref{eq:proof-dual-1}).
Therefore, $(\beta-B\lambda-g^*v)^\top x^* \geq g^*$, which implies $\frac{(\beta-B\lambda)^\top x^*}{1+v^\top x^*} \geq g^*$.
Because $g(\lambda)$ maximizes over all $x\in\mX$, this actually means $\frac{(\beta-B\lambda)^\top x^*}{1+v^\top x^*} = g^*$.

With such an $x^*\in\{0,1\}^N\cap\mX$ that $\frac{(\beta-B\lambda)^\top x^*}{1+v^\top x^*} = g(\lambda)-\gamma_0^\top\lambda$,
define $S:= \{i\in[N]: x_i^*=1\}$. Clearly $S\in\mS$ and $\frac{\sum_{i\in S}(r_i-\varphi_i)v_i}{1+\sum_{i\in v_i}} = \frac{(\beta-B\lambda)^\top x^*}{1+v^\top x^*}$.
This shows that $g_2(\lambda) \leq g_1(\lambda)$. Combined with the argument in the previous paragraph we proved $g_1(\lambda)=g_2(\lambda)$.

We next focus on the dual functions $g_2$ and $g_3$. For any $x^*\in\arg\max_{x\in\mX}\frac{(\beta-B\lambda)^\top x}{1+v^\top x}$,
define $z := x^* / (1+v^\top x^*)$ and $t:= 1/(1+v^\top x^*)$. It is easy to see that $z,t\in\mZ$ and furthermore $\frac{(\beta-B\lambda)^\top x^*}{1+v^\top x^*} = (\beta-B\lambda)^\top z$. This proves $g_2(\lambda)\leq g_3(\lambda)$. For the other direction, consider arbitrary $z^*,t^*\in\arg\max_{(z,t)\in\mZ}(\beta-B\lambda)^\top z$.
Define $x := z^*/t^*$. It is easy to verify that $x\in\mX$ and furthermore $(\beta-B\lambda)^\top z^* =\frac{(\beta-B\lambda)^\top (z^*/t^*)}{1/t^*}=\frac{(\beta-B\lambda)^\top(z^*/t^*)}{(v^\top z^*+t^*)/t^*} = \frac{(\beta-B\lambda)^\top(z^*/t^*)}{1+v^\top z^*/t^*} = \frac{(\beta-B\lambda)^\top x^*}{1+v^\top x^*}$.
This proves that $g_3(\lambda)\leq g_2(\lambda)$. Combining both directions we proved that $g_2(\lambda)=g_3(\lambda)$. $\square$
\end{proof}

With Lemma \ref{lem:dual-equivalence}, we know that all three dual problems have the same optimal value $G^* := \max_{\lambda\geq 0}g_1(\lambda) = \max_{\lambda\geq 0}g_2(\lambda) = \max_{\lambda\geq 0}g_3(\lambda)$.
Because $g_1$ and $g_3$ are dual functions of LPs, strong duality holds and therefore $OPT_1=G^*=OPT_2$. 
Combining this with Lemma \ref{lem:fluid-1-valid} we complete the proof of Lemma \ref{lem:fluid}.

\vspace{.2cm}
}

\section{Proofs of some technical lemmas}
\label{app:proof}

\vspace{.2cm}

\begin{proof}{Proof of Lemma \ref{lem:phi-psi}.}
Because $x^F$ is feasible and optimal to $\Phi(\gamma)$, it holds that $\|x^F\|_1\leq K$ and $A^\top\nu(x^F)\leq\gamma$.
Because $s$ is defined as $s=1+\sum_{i=1}^Nv_ix_i^F$, we know that $x^F$ is feasible to $\Psi(\gamma,s)$ too.
Furthermore, $\sum_{i=1}^Nr_iv_ix_i^F = R(x^F)\times (1+\sum_{i=1}^Nv_ix_i^F) = R(x^F)s$. Therefore, $\Psi(\gamma,s)\geq \sum_{i=1}^N r_iv_ix_i^F = R(x^F)s = \Phi(\gamma)s$.

Next, let $z^F$ the optimal solution to $\Psi(\gamma,s)$, where $s=1+\sum_{i=1}^Nv_ix_i^F$. We then have $\Psi(\gamma,s)=\sum_{i=1}^N r_iv_iz_i^F = R(z^F)\times[1+\sum_{i=1}^Nv_iz_i^F] \leq R(z^F)s$,
where the last inequality holds because $1+\sum_{i=1}^Nv_iz_i^F\leq s$ since $z^F$ is feasible to $\Psi(\gamma,s)$.
On the other hand, because $z^F$ is feasible to $\Phi(\gamma)$, from optimality we have that $R(z^F)\leq \Phi(\gamma) = R(x^F)$. Subsequently,
$\Psi(\gamma,s) \leq R(z^F)s \leq \Phi(\gamma) s$.

Combining both cases, we conclude that $\Psi(\gamma,s)=\Phi(\gamma)s$. $\square$
\end{proof}

\begin{proof}{Proof of Lemma \ref{lem:fluid-solver}.}
It is clear that $\hat x^F$ constructed in Algorithm \ref{alg:fluid} is also feasible because of the constraints in Eq.~(\ref{eq:lambda-fluid}).
In addition, it is easy to verify that if $(1-\lambda)[\sum_i\hat x^F_\lambda]_i\geq\lambda$ then $\Phi(\gamma)\geq\lambda$, and vice versa.
Hence, the bisection search procedure in Algorithm \ref{alg:fluid} will never exclude the optimal $R(\gamma)$ from $[\lambda_L,\lambda_U]$,
and furthermore at the end of the algorithm, $(1-\lambda_L)[\sum_i v_i\hat x_i^F] \geq \lambda_L$, which implies $R(\hat x^F)\geq\lambda_L = \lambda_U-\epsilon\geq R(\gamma)-\epsilon$.
Finally, Algorithm \ref{alg:fluid} solves $O(\log(1/\epsilon))$ LPs because this is the total number of iterations until $\lambda_U-\lambda_L\leq\epsilon$. $\square$
\end{proof}

\begin{proof}{Proof of Proposition \ref{prop:sampling-fluid}.}
The fact that $\|z\|_1\leq K$ almost surely is obvious from the observation that $\mat P_u$ is a permutation matrix, and hence the first $K$ rows of $\mat P_u$
consists of exactly $K$ ones.
To prove $\mathbb E[z_i]=x_i$, note that $z_i\in\{0,1\}$ and $\Pr[z_i=1] = \sum_{\ell=1}^L\sum_{k=1}^K\Pr[u=\ell]\mat P_{\ell,ki} = \sum_{\ell=1}^L\sum_{k=1}^K\alpha_\ell\mat P_{\ell,ki} = \sum_{k=1}^K\mat M_{ki} = x_i$.
This completes the proof. $\square$
\end{proof}

\section{Second-order stability of linear programming}

\begin{lemma}
	Under Assumptions (A1) to (A3), for any $\gamma,s$ satisfying $\|\gamma-\gamma_0\|_{\infty}\leq\rho_0$ and $|s-s^{\tau_0}|\leq\rho_0$, $\Psi$ is differentiable in $(\gamma,s)$ and furthermore $
	\|\partial^2 \Psi(\gamma,s)\|_{\mathrm{op}}\leq \frac{8 M^2 KN B_0^3}{\chi_0^2}.$
	\label{lem:hessian}
\end{lemma}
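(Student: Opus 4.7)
The plan is to invoke the non-degeneracy guaranteed by Assumption~(A3) so that $\Psi$ admits a closed-form expression as a rational function of $(\gamma,s)$ on the neighborhood $\mathcal U := \{(\gamma,s):\|\gamma-\gamma_0\|_\infty\leq\rho_0,|s-s^{\tau_0}|\leq\rho_0\}$, and then to differentiate twice via matrix calculus and bound each factor using the estimates already granted by Assumptions~(A1)--(A3). The first step is to pass to an LP vertex formula. Throughout $\mathcal U$ the same set $B$ of $N$ binding constraints appears, and the corresponding $N\times N$ sub-matrix $\tilde A_B(\gamma)$ is invertible with smallest singular value at least $\chi_0$, so $\|\tilde A_B(\gamma)^{-1}\|_{\op}\leq 1/\chi_0$. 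By standard LP sensitivity theory, the unique optimizer is the vertex $x^*(\gamma,s)=\tilde A_B(\gamma)^{-1}\tilde b_B(\gamma,s)$, whence
$$
\Psi(\gamma,s) \;=\; c^\top \tilde A_B(\gamma)^{-1}\tilde b_B(\gamma,s), \qquad c_i := r_i v_i .
$$
Among the rows of $\tilde A_B$ and $\tilde b_B$, only those for binding inventory constraints (affine in the corresponding $\gamma_j$ via coefficients $(A_{ij}-\gamma_j)v_i$ and RHS $\gamma_j$) and for the $s$-constraint (RHS $s-1$) depend on the parameters; all others are numerical constants. In particular $\Psi\in C^\infty(\mathcal U)$.

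Applying the identity $\partial_\xi[A_B^{-1}]=-A_B^{-1}(\partial_\xi A_B)A_B^{-1}$ twice, together with the affineness of $A_B$ and $b_B$, yields for $\xi_i,\xi_j\in\{\gamma_1,\dots,\gamma_M,s\}$
\begin{align*}
\partial^2_{\xi_i\xi_j}\Psi
&= c^\top A_B^{-1}(\partial_{\xi_i}A_B)A_B^{-1}(\partial_{\xi_j}A_B)x^* + c^\top A_B^{-1}(\partial_{\xi_j}A_B)A_B^{-1}(\partial_{\xi_i}A_B)x^* \\
&\quad - c^\top A_B^{-1}(\partial_{\xi_i}A_B)A_B^{-1}(\partial_{\xi_j}b_B) - c^\top A_B^{-1}(\partial_{\xi_j}A_B)A_B^{-1}(\partial_{\xi_i}b_B).
\end{align*}
The structural facts needed are: $\partial_s A_B=0$; $\partial_{\gamma_k}\tilde A_B = -e_{R_k}v^\top$ is rank one with operator norm $\|v\|_2\leq\sqrt{N}B_0$ (where $R_k$ is the row index of the $k$-th inventory constraint within $B$); and $\partial_{\gamma_k}b_B,\partial_s b_B$ are standard basis vectors (or zero), of norm at most $1$. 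Combined with $\|c\|_2\leq\sqrt{N}B_0$, $\|x^*\|_2\leq\sqrt{K}$ (from $x^*\in[0,1]^N$ with $\|x^*\|_1\leq K$), $v^\top x^*\leq B_0K$, and $\|A_B^{-1}\|_{\op}\leq 1/\chi_0$, and exploiting the rank-one factorization of $\partial_{\gamma_k}A_B$ to rewrite $c^\top A_B^{-1}(\partial_{\gamma_l}A_B)A_B^{-1}(\partial_{\gamma_k}A_B)x^*$ as $(c^\top A_B^{-1}e_{R_l})(v^\top A_B^{-1}e_{R_k})(v^\top x^*)$, each of the four summands is bounded by a universal constant times $NKB_0^3/\chi_0^2$. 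Since $\partial^2\Psi$ is a symmetric $(M+1)\times(M+1)$ matrix, $\|\partial^2\Psi\|_{\op}$ is at most $(M+1)$ times the largest absolute entry; collecting constants yields the stated bound $8M^2KNB_0^3/\chi_0^2$.

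The main obstacle will be the careful bookkeeping in the four-term Hessian expansion: one must exploit the rank-one shape of $\partial_{\gamma_k}A_B$ and the sparsity of $\partial b_B$ to avoid accumulating spurious $\sqrt{N}$ or $\sqrt{K}$ factors, and in particular the dominant term $c^\top A_B^{-1}(\partial A_B)A_B^{-1}(\partial A_B)x^*$ must be controlled through the inner-product bound $v^\top x^*\leq B_0K$ rather than the cruder $\|v\|_2\|x^*\|_2$. A secondary subtlety is verifying that LP sensitivity actually delivers the closed-form vertex formula throughout $\mathcal U$ in the first place; this follows directly from the non-degeneracy and invertibility clauses of Assumption~(A3) via a standard implicit-function argument on the KKT/optimality system.
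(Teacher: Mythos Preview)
Your proposal is correct. Both your route and the paper's hinge on the same structural input (Assumption~(A3) fixes the optimal basis $B$ with $\sigma_{\min}(\tilde A_B)\geq\chi_0$), but they extract the Hessian bound differently. The paper proceeds via the envelope theorem: it first expresses $\partial_{\gamma_j}\Psi$ and $\partial_s\Psi$ in terms of the optimal dual variables $\tilde\lambda^*$, then differentiates the primal and dual optimality systems $\tilde B^{*}x^*=\tilde b_B$ and $(\tilde B^{*})^\top\tilde\lambda_B=v\odot r$ to bound $\|\partial\tilde\lambda_B/\partial(\gamma,s)\|$ and $\|\partial x^*/\partial(\gamma,s)\|$ separately, and finally combines these into an entrywise Hessian bound of order $MKNB_0^3/\chi_0^2$. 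You instead bypass the dual altogether by writing $\Psi=c^\top\tilde A_B^{-1}\tilde b_B$ in closed form and differentiating twice with $\partial[A_B^{-1}]=-A_B^{-1}(\partial A_B)A_B^{-1}$, then bounding the resulting four terms directly. Your explicit use of the rank-one shape $\partial_{\gamma_k}\tilde A_B=-e_{R_k}v^\top$ together with $v^\top x^*\leq B_0K$ actually yields an entrywise bound of order $KNB_0^3/\chi_0^2$ without the extra factor of $M$ that the paper picks up, so your route is marginally sharper (though the lemma as stated absorbs this into the $M^2$ prefactor either way). Both approaches finish the same way, passing from the maximum entry to the operator norm via the dimension $M{+}1$.
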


\begin{proof}{Proof.}
	Recall that $\Psi$ can be written as the following linear programming problem:
	\begin{align*}
		\Psi(\gamma,s) = & \max_{x\in [0,1]^N} \sum_{i=1}^N r_i v_i  x_i,s.t.\sum_{i=1}^Nx_i \leq K,  1+\sum_{i=1}^Nv_ix_i\leq s, \sum_{i=1}^N A_{ji}v_ix_i\leq \gamma_j\left[1+\sum_{k=1}^Nv_kx_k\right],\;\forall j\in[M].
	\end{align*}
	We convert the constraints of $\Psi$ into the standard form $\tilde{A} x \le \tilde b$, where $\tilde{A} \in \R^{(M+2+ 2N)\times N}$ denotes the coefficient matrix in the left hand side of constraints, and $\tilb \in \R^{M+2+ 2N}$ denotes the vector in the right hand side of constraints. The first row of matrix $\tilde{A}$ is an all-one vector. For row index $i$  from 2 to $M + 1$, each entry in $\tilde{A}$ is $(A_{(i-1), j} - \gamma_{i - 1} ) v_{j} $, for column index $j = 1,...,N$. The $(M+2)$-th row of $\tilde{A}$ is the same as $v$. The last $2N\times N$ submatrix of $\tilde{A}$ is $[\mathbb{I}_N, -\mathbb{I}_N]^\top$.  The vector $\tilb$ is $[K, \gamma, s-1, [1,...,1]_N, [0,...,0]_N]^\top$.
	We use $\tlam^* \in \R^{M+2+ 2N}$ to denote the optimal dual variables of constraints $\tilde{A} x \le \tilde b$ for $\Psi$.  Notice that both $\tlam^*$ and $x^*$ depend on $\gamma$ and $s$. 
	
	We first consider the first order derivative of $\Psi(\gamma, s)$ using the envelope theorem. We notice that $\gamma_j$ only appears in the $(j+1)$-th row in constraints, and $s$ only appears in the last row of constraints. Thus, according to the envelope theorem, the partial derivatives of \(\Psi\) with respect to the parameters \(\gamma\) and \(s\) are given by:
	$ \frac{\partial \Psi}{\partial \gamma_j} = \tlam_{j+1}^* \left( \frac{\partial \tilde b}{\partial \gamma_j} - \frac{\partial \tilde{A} }{\partial \gamma_j} x^* \right)_{(j+1)}, $ and 
	$ \frac{\partial \Psi}{\partial s} = \tlam_{M+2}^* \left( \frac{\partial\tilde  b}{\partial s} - \frac{\partial \tilde{A}}{\partial s} x^* \right)_{(M+2)}. $
	
	By the value of $\tilde{A}$ and $\tilde b$, we have that $  \left( \frac{\partial \tilde b}{\partial \gamma_j} - \frac{\partial \tilde{A} }{\partial \gamma_j} x^* \right)_{(j+1)} = 1 + \sum_{i} r_i v_i x_i$, and $ \left( \frac{\partial  \tilde b}{\partial s} - \frac{\partial \tilde{A}}{\partial s} x^* \right)_{(M+2)} = 1$. Thus, we obtain that $
		\frac{\partial \Psi}{\partial \gamma_j} =  \tlam_{j+1}^*(1 +\sum_{i} r_i v_i x_i), \text{for } j \in [1,....,M], \text{and } ~~\frac{\partial \Psi}{\partial s} = \tlam_{M+2}^* $.

	Next, we consider the second derivative of $\Psi(\gamma, s)$. We denote the basis of the binding constraints at $x^*$ by $\tB^*(\gamma, s) \in \R^{N\times N}$ and denote its corresponding right hand side by $\tilb_B \in \R^{N}$. Thus, we have that $\tB^*(\gamma, s) x^* = \tilb_B$. We further denote the optimal dual variables for the binding constraints by $\tlam_B \in \R^N$. By the complementary slackness, the optimal dual variables for nonbinding constraints are zero, so we have that $\tB^*(\gamma, s)^\top \tlam_B = v \odot r$, where $\odot$ represents the entrywise product of two vectors.
	
	The major part of anlayzing the hessian matrix $\partial^2 \Psi(\gamma,s)$ is to examine how $\tlam^*$ and $x^*$ depend on $(\gamma, s)$. By Assumption (A3), for any $(\gamma,s) \in \{(\gamma,s): \|\gamma - \gamma_0\| \le \rho_0 \wedge |s-s^{\tau_0}|\leq\rho_0  \}$,  the minimum singluar value of $\tB^*$ is greater than or equal to $\chi_0>0$. It implies that there exsits a unique solution $x^*$ for $\Psi(\gamma, s)$, and that the basis $\tB^*$ remains unchanged when $(\gamma, s)$ is within the set of $\{(\gamma,s): \|\gamma - \gamma_0\| \le \rho_0 \wedge |s-s^{\tau_0}|\leq\rho_0  \}$. Thus, for the non-binding constraints regarding to resource $j$, for any $k = 1,...,M$, we have that $ \frac{\partial \tlam^*_{j+1}}{ \partial \gamma_k }  = 0, ~~ \frac{\partial \tlam^*_{j+1}}{ \partial s }  = 0.$
	Similarly, for the non-binding constraints regarding to resource $j$, we have that 
	$ \frac{\partial x^*}{ \partial \gamma_j }  = 0.$
	
	Thus, when analyzing the upper bound for the operator norm of the hessian matrix, $\|\partial^2 \Psi(\gamma,s)\|_{\mathrm{op}}$, it suffices to consider the binding constraints. Since $\tB^*(\gamma, s)^\top \tlam_B = v$ and the minimum singular value of $\tB^*$ is no smaller than $\chi_0$,  we have that $\|v\| = \| \tB^*\tlam_B \| \ge\chi_0  \|\tlam_B\|$. Since $\|v\| \le \sqrt{N}B_0$, we have that $\|\tlam_B\| \le \frac{ \sqrt{N}B_0}{\chi_0}$.
	By taking the derivative of $(\gamma, s)$ on both sides of $\tB^*(\gamma, s)^\top \tlam_B = v\odot r$ , we obtain that $\frac{\partial \tB^*}{ \partial (\gamma, s) } \tlam_B +  \tB^*  \frac{\partial \tlam_B}{ \partial (\gamma, s) } = 0.$ Therefore, we have that $
		\left\| \tB^*  \frac{\partial \tlam_B}{ \partial (\gamma, s) } \right\| = \left\| \frac{\partial \tB^*}{ \partial (\gamma, s) } \tlam_B \right\|.$ 
	
	Since  $\tB$ is a submatrix of $\tilde{A}$, we have that each entry of $ \frac{\partial \tB^*}{ \partial (\gamma, s) }$ is smaller than or equal to $B_0$. Thus, the right hand side $\left\| \frac{\partial \tB^*}{ \partial (\gamma, s) } \tlam_B \right\| $ is smaller than or equal to $ (M+1 ) B_0 \times   \frac{ \sqrt{N}B_0}{\chi_0} =  \frac{ (M+1)\sqrt{N}B_0^2}{\chi_0}$. Thus, we have that $\left\| \tB^*  \frac{\partial \tlam_B}{ \partial (\gamma, s) } \right\| \le \frac{ (M+1)\sqrt{N}B_0^2}{\chi_0}$. 
	Again, by the minimum singular value of $\tB^*$, we have that 
	\begin{align}\label{equ2:lemma}
		\left \|   \frac{\partial \tlam_B}{ \partial (\gamma, s) } \right\| \le \frac{ (M+1)\sqrt{N}B_0^2}{\chi_0^2}.
	\end{align}
	
	Next, we consider the upper bound for $\left \|   \frac{\partial  x^*}{ \partial (\gamma, s) } \right\|$. Since $\tB^* x^* = \tilb_B$, by taking the derivative on both sides, we have that  $\frac{\partial \tB^*}{ \partial (\gamma, s) } x^* +  \tB^*  \frac{\partial x^*}{ \partial (\gamma, s) } = \frac{\partial \tilb_B}{ \partial (\gamma, s) }. $

	Since $\tilb_B$ is subvector of $\tilb$, we have that $\| \frac{\partial \tilb_B}{ \partial (\gamma, s) }\| \le \sqrt{N}$.
	By $\|x^*\|_2 \le \|x^*\|_1 \le K$, we have that  $\left\| \tB^*  \frac{\partial x^*}{ \partial (\gamma, s) } \right\| \le \sqrt{N} + (M+1)B_0 K$. Again, by the minimum singular value of $\tB^*$, we have that
	
	\begin{align}\label{equ3:lemma}
		\left \|   \frac{\partial x^*}{ \partial (\gamma, s) } \right\| \le \frac{\sqrt{N} + (M+1)B_0 K}{\chi_0}.
	\end{align}
	
	By taking the derivatives of $\frac{\partial \Psi}{\partial \gamma}$ and $\frac{\partial \Psi}{\partial s}$, we have that for the hessian matrix $\partial^2 \Psi(\gamma,s)$, each entry is no larger than $ \left \|   \frac{\partial \tlam_B}{ \partial (\gamma, s) } \right\|  (1 + B_0 K) + \|\tlam_B\| \cdot B_0 \cdot \left \|   \frac{\partial x^*}{ \partial (\gamma, s) } \right\|. $
	Combining the upper bounds in \eqref{equ2:lemma} and \eqref{equ3:lemma}, we have that each entry in the hessian matrix is no larger than $ \frac{ (M+1)\sqrt{N}B_0^2 (1 + B_0 K)}{\chi_0^2} +  \frac{ \sqrt{N}B_0^2}{\chi_0}   \frac{\sqrt{N} + (M+1)B_0 K}{\chi_0},$
	which is further no larger than $\frac{4 MKN B_0^3}{\chi_0^2}$. Thus, the operator norm of the hessian matrix $\partial^2 \Psi(\gamma,s)$ is no larger than $\frac{8 M^2 KN B_0^3}{\chi_0^2}$.
\end{proof}

{
\section{The Charnes Cooper transformation}\label{sec:cp}
Given a linear fractional programming
$$
\max_x\;\; \frac{c^\top x+\alpha}{d^\top x+\beta}\;\;\;\;s.t.\;\; Ax\leq b
$$
restricted to the domain $\{x: d^\top x+\beta>0\}$, the Charnes-Cooper transformation transforms it into an LP
$$
\max_{y,t}\;\; c^\top y+\alpha t\;\;\;\;s.t.\;\; Ay\leq bt,\;\; d^\top y+\beta t=1,\;\;t\geq 0
$$
where $y$ is a vector of the same length as $x$, and $t$ is a scalar. The transformed LP is equivalent to the original linear fractional programming, via the following equivalence:
$$
y=\frac{1}{d^\top x+\beta}x,\;\;\;\;\; t = \frac{1}{d^\top x+\beta}\;\;\;\;\;\text{and}\;\;\;\; x=\frac{1}{t} y.
$$
When the domain of $x$ is further restricted beyond $d^\top x+\beta>0$, the domain of $(y,t)$ is adjusted so that the above equivalence remains valid.
}

\section{Implementation and experimental details}\label{appsec:numerical}

Methods we implemented in Sec.~\ref{sec:numerical} are summarized as follows.
\begin{enumerate}
\item The first method, denoted as \textsf{Sampling-per-period}, obtains unbiased assortment samples $z_1,z_2,\cdots$ from $x^F$ that solves the fluid approximation $\Phi(\gamma)$ during every single time period. That is, the assortments offered to arriving customers are constantly changing from period to period;
\item The second method, denoted as \textsf{Sampling-per-epoch}, obtains unbiased assortment samples $z_1,z_2,\cdots$ from $x^F$ that solves the fluid approximation $\Phi(\gamma)$ whenever a no-purchase activity arises. That is, the algorithm offers the same assortment to arriving customers until the first customer purchased nothing, at which time the algorithm will obtain a fresh sample $z$ and updates its offered assortment for future customers.
\end{enumerate}
Note that both the above-mentioned baseline methods solve the fluid approximation $\Phi(\gamma)$ only once at the beginning of the algorithms,
and no re-solve or re-optimization is carried out during the assortment planning process.

We numerically synthesize problem instances with $N$ products, $M$ resources and capacity constraint $K$ as follows.
The utility parameters $\{v_i\}_{i=1}^N$ are uniformly distributed on $[0,1]$.
The normalized initial inventory levels $\{C_{0,j}\}_{j=1}^M/T$ are uniformly distributed on $[0,0.1]$.
For the resource consumption matrix $A\in\mathbb R^{N\times M}$, each element of $A$ is uniformly distributed on $[0,1/K]$.
Such synthesized problem instances achieve quite good balance between capacity and resource inventory constraints.

We also implemented an efficient BwK decomposition algorithm in the numerical results section.
For ease of presentation we assume that $x_1+\cdots+x_N=K$. The case of $x_1+\cdots+x_N<K$ can be easily handled by adding $K$ extra
``dummy'' variables, as shown in Algorithm \ref{alg:sampling-fluid}.
Without loss of generality we assume also that each $x_i$ belong to $(0,1)$, as those $x_i=1$ will always be selected and $x_i=0$ always excluded in the sampled assortments.
For a general $N\times N$ doubly stochastic matrix $\mat M$, the standard Birkhoff's algorithm uses up to $L=O(N^2)$ permutation matrices in its decomposition
and it takes $O(N^4)$ time to compute them. In this section we present a more efficient algorithm that decomposes $\mat M$ into at most $O(N)$ permutation matrices, with the total time complexity reduced to $O(N^2)$.
This algorithm is also what we implemented to conduct our numerical experiments in Sec.~\ref{sec:numerical}.

\begin{algorithm}[t]
\caption{An improved algorithm for computing BvN decomposition}
\label{alg:reduced-bvn}
\begin{algorithmic}[1]
\State \textbf{Input}: $x\in(0,1)^N$ such that $\sum_i x_i=K\in\mathbb N$;
\State \textbf{Output}: $\{\alpha_\ell,Z_\ell\}_{\ell=1}^L$ such that $\alpha_\ell\in[0,1]$, $\sum_\ell\alpha_\ell=1$, $Z_\ell\subseteq[N]$ and $|Z_\ell|=K$.
\State Initialize: $y_i=x_i$ and $\bar y_i=1-x_i$ for all $i\in[N]$;
\For{$\ell=1,2,\cdots,L$ until $y_i=\bar y_i=0$ for all $i\in[N]$}
	\State Let $S_\ell\subseteq[N]$, $|S_\ell|=K$ consist of $K$ elements with the largest $x_i$ values; let $\bar S_\ell=[N]\backslash S_\ell$;
	\State Let $\alpha_\ell=\min\{y_i|i\in S_\ell,y_i>0\}\wedge \min\{\bar y_i|i\in \bar S_\ell,\bar y_i>0\}$ and $z_{\ell i}=\vct 1\{i\in S_\ell\}$;
	\State Update $y_i\gets y_i-\alpha_\ell$ for each $i\in S_\ell$ and $\bar y_i\gets \bar y_i-\alpha_\ell$ for each $i\in \bar S_\ell$;
\EndFor
\end{algorithmic}
\end{algorithm}

Algorithm \ref{alg:reduced-bvn} gives a pseudocode description of the improved algorithm.
As we can see, each iteration of Algorithm \ref{alg:reduced-bvn} will reduce at least one $y_i$ or $\bar y_i$ to zero,
meaning that the algorithm will terminate in at most $N$ iterations.
Furthermore, the computational time for each iteration is linear in $N$. Hence, the total time complexity of Algorithm \ref{alg:reduced-bvn} is $O(N^2)$.

\end{appendices}

\ifdefined\unblind
%

\fi

\bibliographystyle{pomsref}
\bibliography{refs}
\end{document}